\renewcommand{\r}{\mathbb{R}}
\renewcommand{\tilde}{\widetilde} 
\renewcommand{\epsilon}{\varepsilon}
\numberwithin{equation}{section}
\theoremstyle{plain}
\newtheorem{prop}{Proposition}[section]
\newtheorem{thm}[prop]{Theorem}
\newtheorem{coroll}[prop]{Corollary}
\newtheorem{lemma}[prop]{Lemma}
\theoremstyle{definition}
\newtheorem{remark}[prop]{Remark}
\newtheorem{example}[prop]{Example}
\newtheorem{assumption}[prop]{Assumption}
\newcommand{\lyp}{{\mathcal V}} 
\begin{document}

\begin{frontmatter}
\title{Couplings and quantitative contraction rates for Langevin dynamics\thanksref{T1}}
\runtitle{Couplings for the Langevin equation}
\thankstext{T1}{Financial support from DAAD and French government through the PROCOPE program, and from the German Science foundation through the {\em Hausdorff Center for Mathematics} is gratefully acknowledged.}

\begin{aug}
\author{\fnms{Andreas} \snm{Eberle}\thanksref{m1}\ead[label=e1]{eberle@uni-bonn.de}\ead[label=u1,url]{http://wt.iam.uni-bonn.de}},
\author{\fnms{Arnaud} \snm{Guillin}\thanksref{m2}\ead[label=e2]{guillin@math.univ-bpclermont.fr}\ead[label=u2,url]{http://math.univ-bpclermont.fr/~guillin/}}
\and 
\author{\fnms{Raphael} \snm{Zimmer}\thanksref{m1}
\ead[label=e3]{raphael.zimmer@uni-bonn.de} 
\ead[label=u3,url]{http://wt.iam.uni-bonn.de}}

\runauthor{A. Eberle, A. Guillin, R. Zimmer}

\affiliation{University of Bonn\thanksmark{m1} and Universit\'e Blaise Pascal\,\thanksmark{m2}}

\address{Universit\"at Bonn\\
Institut f\"ur Angewandte Mathematik\\
 Endenicher Allee 60\\
  53115 Bonn, Germany\\
\printead{e1}\\
\phantom{E-mail:\ }\printead*{e3}\\
\printead{u1}}

\address{Laboratoire de Math\'ematiques Blaise Pascal\\
CNRS - UMR 6620\\
Universit\'e Clermont-Auvergne\\
Avenue des landais,\\
63177 Aubiere cedex, France\\
\printead{e2}\\
\printead{u2}}
\end{aug}

\begin{abstract}
\ We introduce a new
probabilistic approach to quantify convergence to equilibrium for
(kinetic) Langevin processes. In contrast to previous analytic approaches
that focus on the associated kinetic Fokker-Planck equation, our
approach is based on a specific combination of reflection
and synchronous coupling of two solutions of the Langevin equation. 
It yields contractions in a particular Wasserstein distance, and it
provides rather precise bounds for convergence to equilibrium at the
borderline between the overdamped and the underdamped regime.
In particular, we are able to recover kinetic behavior in terms
of explicit lower bounds for the contraction rate. For example, for
a rescaled double-well potential with local minima at distance $a$,
we obtain a lower bound for the 
contraction rate of order $\Omega (a^{-1})$  provided the friction coefficient is of order $\Theta (a^{-1})$.
\end{abstract}

\begin{keyword}[class=MSC]
\kwd[Primary ]{60J60}
\kwd{60H10}
\kwd{35Q84}
\kwd{35B40}
\end{keyword}

\begin{keyword}
\kwd{Langevin diffusion} 
\kwd{kinetic Fokker-Planck equation}
\kwd{stochastic Hamiltonian dynamics}
\kwd{reflection coupling}
\kwd{convergence to equilibrium}
\kwd{hypocoercivity}
\kwd{quantitative bounds}
\kwd{Wasserstein distance}
\kwd{Lyapunov functions}
\end{keyword}

\end{frontmatter}

\section{Introduction}
Suppose that $U$ is a function in $C^1(\mathbb R^d)$ such that
$\nabla U$ is Lipschitz continuous, and
let $u,\gamma \in (0,\infty )$. 
We consider a (kinetic) Langevin diffusion $(X_t,V_t)_{t\ge 0}$
with state space $\mathbb R^{2d}$ that is given by
the stochastic differential equation
\begin{eqnarray}
\label{Langevin}dX_t &=&V_t\, dt,\\
\nonumber dV_t&=&-\gamma V_t\, dt\, -\,  u \, \nabla U(X_t)\, dt\,
+\, \sqrt{2\gamma u}\, dB_t.
\end{eqnarray}
Here 
$(B_t)_{t\ge 0}$ is a $d$-dimensional Brownian motion that is defined on a probability
space $(\Omega ,\mathcal A,\mathbb P)$. 
Since the coefficients are Lipschitz continuous,
a unique strong solution of the Langevin equation exists for any initial
condition, and the solution gives rise to a strong Markov process with
generator
\begin{equation}\label{Gen}
\mathcal L\ =\ u\gamma\,\Delta_v\, -\, \gamma\, v\cdot\nabla_v
\, -\, u\,\nabla U(x)\cdot\nabla_v\, +\, v\cdot\nabla_x\, .
\end{equation}
The corresponding Kolmogorov forward equation is the {\em kinetic Fokker--Planck equation}. Under the assumptions on $U$ imposed below, it can be verified
that $\exp (-U)\in L^1(\mathbb R^d)$, and that the probability measure
\begin{equation}
\label{stat}
\mu_\ast (dx\, dv)\, =\, \mathcal{Z}^{-1}\,e^{ -U(x)-\frac{|v|^2}{2u}}\;
dx\, dv,\quad   \mathcal Z\, =\, (2\pi u)^{d/2}\int e^{-U(x)}\, dx,
\end{equation}  
is invariant for the transition semigroup $(p_t)_{t\ge 0}$, see e.g.\
\cite[Prop.\ 6.1]{MR3288096}.
\medskip

In statistical physics, the Langevin equation \eqref{Langevin} describes the motion of a 
particle with position $X_t$ and velocity $V_t$ in a force field $b=-\nabla U$ 
subject to damping and random collisions \cite{einstein1905molekularkinetischen,
von1906kinetischen, nelson1967dynamical,langevin1908theorie,MR2583642}. In the physical
interpretation, $\gamma$ is the friction coefficient (per unit mass), and $ u $ is the inverse mass. Discretizations of the Langevin equation are relevant for molecular dynamics simulations \cite{MR2681239}.
Hamiltonian Monte Carlo methods for sampling and
integral estimation are based on different types of discrete
time analogues to Langevin dynamics \cite{duane1987hybrid,neal,MR2681239,2015arXiv151109382B}.
In numerical simulations, often a better performance of these HMC methods
compared to traditional MCMC approaches is observed, but the
corresponding convergence acceleration is still not well understood theoretically.
\medskip

For these and other reasons, an important question is how to obtain explicit bounds on the
speed of convergence of the law of $(X_t,V_t)$ towards the invariant probability measure $\mu_\ast$. Since the noise is only acting on the second 
component, the generator of the Langevin diffusion is degenerate, and thus classical approaches can
not be applied in a straightforward way. Indeed, $\mathcal L$ is a 
typical example of a hypocoercive operator in the sense of Villani
\cite{MR2339441,MR2562709}. Several analytic
approaches to convergence to equilibrium for kinetic
Fokker-Planck equations have been proposed during the last 15 years \cite{MR1787105,MR1969727,MR2034753,MR2130405,
MR2215889,MR2339441,MR2562709,MR3324910,
MR2957543,MR3488535,GrothausStilgenbauerJFA, GrothausStilgenbauerFAT,  2016arXiv160204177B}.
These are based respectively on Witten Laplacians and functional inequalities, semigroup theory, 
and in particular on hypocoercivity methods,
see also \cite{GM16} for some explorations around the Gaussian
case and the effect of hypoellipticity.
There are only few
articles which study the ergodic properties of Langevin processes using more probabilistic arguments, 
cf.\ \cite{MR1924934,MR1807683,MR1931266,MR1889227,
BCG08,MR2731396}. Most of these results ultimately rely
on arguments used in Harris' type theorems, i.e., they assume a Lyapunov
drift condition which implies recurrence of the process w.r.t.\ a
compact set together with a control over the average excursion
length.
This condition is then combined with an argument showing that for
starting points in the recurrent set,
the transition probabilities 
are not singular w.r.t.\ each other. While the approaches are of a probabilistic nature, the behaviour
of the process inside the recurrent set is not very transparent. Correspondingly, these approaches lead 
to qualitative rather than quantitative convergence results.
\smallskip

An open question asked by Villani in
\cite[Ch.\ 2, Bibliographical notes]{MR2459454} is how to prove 
exponential convergence to equilibrium by a direct {\em coupling approach}. 
The motivation for this is two-fold: On the one hand, coupling methods often
provide a good probabilistic understanding of the dynamics. On the other hand,
couplings have been proven useful in establishing precise bounds on the long-time behaviour of non-degenerate diffusion processes
\cite{LindvallRogers, CL, EberlePTRF, 2016arXivE}. The only results for Langevin processes in this direction 
that we are aware of are rather restrictive:
Under the assumption that the force field $\nabla U$
is a small perturbation of a linear function, Bolley, Guillin and Malrieu 
\cite{MR2731396} use a synchronous
coupling to show exponential mixing for \eqref{Langevin} in $L^2$ Wasserstein distances.  Moreover, in \cite{BenArousCranstonKendall,BanerjeeKendall}, 
couplings for the Kolmogorov diffusion have been considered.
This process solves an equation similar to \eqref{Langevin} without
damping and with $U\equiv 0$.
\smallskip

Here, 
we develop a novel coupling approach for Langevin equations that works for a much wider class of force fields. We briefly describe the main ideas behind this approach: A coupling of two solutions of \eqref{Langevin}
is given by stochastic processes $(X_t,V_t)_{t\ge 0}$ and $(X'_t,V'_t)_{t\ge 0}$
with state space $\mathbb R^{2d}$ that are defined on a common probability space and
satisfy \eqref{Langevin} and, respectively,
\begin{eqnarray}
\label{Langevincopy}dX'_t &=&V'_t\, dt,\\
\nonumber dV'_t&=&-\gamma V'_t\, dt\, -\,  u \, \nabla U(X'_t)\, dt\,
+\, \sqrt{2\gamma u}\, dB'_t,
\end{eqnarray}
where $(B_t)_{t\ge 0}$ and $(B'_t)_{t\ge 0}$ are $d$-dimensional Brownian motions. The only 
freedom in constructing a coupling is the way these Brownian motions are related 
to each other. For a synchronous coupling, $B_t=B'_t$ for all $t$. In this case, 
the difference process $(Z_t,W_t)=(X_t-X'_t,V_t-V'_t)$ satisfies a deterministic
o.d.e., and contractivity holds if and only if it holds for the equation without noise.
This applies for example for overdamped Langevin diffusions in a strictly convex 
potential or on a positively curved Riemannian manifold, but in general it is a  rather restrictive condition that is not satisfied in our case. Nevertheless, 
one can observe that w.r.t.\ an appropriately chosen metric on $\mathbb R^{2d}$, the difference process is contractive without noise as long as it is in a
neighbourhood of the hyperplane where $Q_t:=Z_t+\gamma^{-1 }W_t=0$, see
Section \ref{sub:coupling} below. Therefore, synchronous coupling can be applied 
in this region. 

If the dynamics is not contractive, one has to exploit the random fluctuations to
ensure that the two copies approach each other in some sense. A well-known
approach is reflection coupling \cite{LindvallRogers} where the noise increments
$dB_t$ and $dB'_t$ are synchronized in directions orthogonal to the difference
of the two copies and reflected in the direction connecting the copies. As a 
consequence, the difference process is driven by a one-dimensional noise in this
direction. It has been shown in \cite{CW,MR2843007,EberlePTRF,2016arXivE} that this can be exploited to obtain average contractivity with relatively sharp explicit rates
in distances that are appropriately chosen concave functions of $\ell^1$ or $\ell^2$ metrics. This approach works well for non-degenerate diffusions but it
fails for the degenerate case. Therefore it does not apply directly to the Langevin 
equation. Nevertheless, it can be used in the directions complementary to the
contractive hyperplane. 

Combining the two types of couplings above suggests that we should apply a 
coupling that is synchronous whenever $Q_t$ equals $0$ (or is close to $0$), and a reflection coupling in the complementary directions otherwise. This means we should set
\begin{equation}\label{stickycoup}
dB'_t\ =\ (I_d- 1_{\{ Q_t\neq 0\} }2e_te_t^T)   \, dB_t
\end{equation}
where $e_t=Q_t/|Q_t|$.  Then the resulting coupling difference process will be driven by
noise whenever $Q_t\neq 0$, and the noise will be switched off if $Q_t=0$. 
L\'evy's characterization ensures that $(B'_t)$ is again a Brownian motion, and the
resulting coupling process is a diffusion process on $\mathbb R^{4d}$ that is
\emph{sticky} \cite{Watanabe1,Watanabe2,sticky} on the subspace $\{ (x,v,x',v')\in\mathbb R^{4d}:x-x'+\gamma^{-1}(v-v')=0\}$
where contractivity holds without noise. This means that almost surely, after reaching the subspace due to reflection coupling, the process spends 
a positive amount of time on this subspace although it does not stay on the
subspace for any positive time-interval. Each time it leaves the subspace, it immediately returns due to the random fluctuations that are switched on when 
$Q_t\neq 0$. In total, the set $\{ t\in [0,\infty ): Q_t=0\}$ of all times where the process visits the subspace has almost surely positive Lebesgue measure although it does 
not contain any non-empty open interval.
The rigorous construction of a corresponding 
sticky coupling can be carried out by a weak convergence approach that is based on approximating the discontinuous coefficients in \eqref{stickycoup} by Lipschitz
continuous functions. This has been done 
in a slightly different setup in
\cite{sticky}. In general, the stochastic differential equation for the corresponding
sticky coupling process does not have a strong solution but the approximation 
procedure yields a weak solution, see \cite{sticky} for details. 

Since the
construction and control of the sticky coupling described above is possible
but technically involved, we actually do not consider the sticky coupling itself here. Instead, we 
use approximations of such a coupling in order to derive bounds for contraction rates, see \eqref{coupling} below. The corresponding limit is taken only in 
the resulting bounds, and the construction of the sticky coupling itself (i.e., the limit of the approximating coupling processes) is not required for our results.
The speed of convergence is then measured in Kantorovich distances ($L^1$ Wasserstein distances) by adapting and optimizing the underlying (semi-)metric
w.r.t.\ the given model and the chosen coupling. Here, we basically follow the strategy developed in \cite{2016arXivE} which extends the results in 
\cite{MR2843007,EberlePTRF}. The approach taken in \cite{2016arXivE}, which is partially based on ideas from \cite{MR2857021,MR2773030, CW}, is to build a multiplicative
semi-metric $\rho$ out of a concave function of the underlying distance and a 
Lyapunov function that ensures contractivity at large distances, see
Section \ref{sub:metric} below. In a slight modification of \eqref{stickycoup}, we will apply 
synchronous coupling at large distances, since here a Lyapunov drift condition 
will ensure contractivity.
Both the concave functions and the constants
entering the definition of the metric $\rho$ (see \eqref{rho} and \eqref{rDist}) are carefully
chosen in order to optimize the order of the resulting contraction rates.\medskip

The approach for studying long-time stability properties of diffusion processes
by using sticky couplings (or corresponding approximations) seems to be useful in many different contexts, see 
\cite{sticky}. For example, it is also related
to the application of a similar strategy
to infinite-dimensional stochastic differential equations with possibly
degenerate noise in \cite{zimmer16}, cf.\ also \cite{MR1939651,MR1937652}.
In general, the idea is to identify some submanifold of the state space for the coupling where contraction properties hold for the equation without noise. Then 
synchronous coupling can be applied on this submanifold whereas outside,
random fluctuations introduced by a different coupling ensure that the
process reaches the submanifold in finite time.\smallskip

Besides providing an intuitive understanding for the mechanism
of convergence to equilibrium, the coupling approach yields both qualitatively
new, and explicit quantitative results in several cases of interest.
Before explaining the coupling construction and stating the results in detail, we illustrate this by
an example:

\begin{example}[Double-well potential]
Suppose that $U\in C^1(\mathbb R )$ is a Lipschitz continuous double-well potential defined by
$$U(x)\ =\ 
\begin{cases} (|x|-1)^2/2 &\mbox{for }|x|\ge 1/2,\\ 1/4-|x|^2/2&\mbox{for }|x|\le 1/2,\end{cases}  $$
and let $U_a(x)=U(x/a)$ be the rescaled potential
with the same height for the potential well, but minima at distance $2 a$.
Then our main result shows that for any $a,u,\gamma\in (0,\infty )$ there exist a constant $c\in (0,\infty )$, a semimetric $\rho$ on $\mathbb R^{2d}$, and a corresponding 
Kantorovich semimetric $\mathcal W_\rho$ such that for all 
probability measures $\mu ,\nu$ on $\mathbb R^{2d}$,
\begin{equation*}
\mathcal W_\rho (\mu p_t,\nu p_t)\ \le \ e^{-ct}\,\mathcal W_\rho
(\mu ,\nu )\qquad\mbox{for any }t\ge 0.
\end{equation*}
As a consequence, we also obtain convergence to equilibrium in the
standard $L^2$ Wasserstein distance with the same exponential rate $c$.
Below, we give explicit lower bounds for the contraction rate. For example, if $\gamma a\ge \sqrt{30\, u}$ then 
$$c\ \ge\ \frac{\sqrt u}{107}\,\min\left( (\gamma a)^{-4}u^2, \, e^{-8}(\gamma a)^{-2}u,\, 2^{-3/2}e^{-8}\right) \, a^{-1}, $$
see Example \ref{ex:multiwell} (with parameters $\mathcal R=4a$, $L=a^{-2}$ and $\beta =L\mathcal R^2/2=8$). In general, if the value of 
$\gamma a$ and $u$ are fixed (i.e., the friction coefficient $\gamma$ is adjusted to the potential) then the contraction rate is of order $\Omega (a^{-1})$, i.e., $c\ge c_0\cdot a^{-1}$ for a positive constant $c_0$. This
clearly reflects the {\em kinetic behaviour}, and it is in contrast to the rate $O(a^{-2})$ for 
convergence to equilibrium of the overdamped limit $dX_t=-u \,\nabla U(X_t)\, dt+\sqrt{2u}\, dB_t$. 
\end{example}

\section{Main results}

\subsection{Coupling construction}\label{sub:coupling}

We first explain the
construction of the coupling briefly, see Section \ref{sec:coupling} for full details. Suppose
that $((X_t,V_t),(X'_t,V'_t))$ is an arbitrary coupling of two solutions of
the Langevin equation \eqref{Langevin} driven by Brownian motions 
$(B_t)$ and $(B'_t)$. Then the difference process $(Z_t,W_t)=(X_t-X'_t,
V_t-V'_t)$ satisfies the stochastic differential equation
\begin{eqnarray*}
dZ_t &=&W_t\, dt,\\
\nonumber dW_t&=&-\gamma W_t\, dt\, -\,  u \, (\nabla U(X_t)-\nabla U(X'_t))\, dt\,
+\, \sqrt{2\gamma u}\, d(B-B')_t.
\end{eqnarray*}
Introducing the new coordinates $Q_t=Z_t+\gamma^{-1}W_t$, the system 
takes the form
\begin{eqnarray}\label{eq:AA}
dZ_t &=&-\gamma Z_t\, dt\, +\, \gamma Q_t\, dt,\\
dQ_t&=& -\,  u\gamma^{-1} \, (\nabla U(X_t)-\nabla U(X'_t))\, dt\,
+\, \sqrt{2 u\gamma^{-1}}\, d(B-B')_t.\label{eq:AB}
\end{eqnarray}
Since $\gamma >0$, the first equation is contractive if $Q_t=0$. The key
idea is now to apply a synchronous coupling whenever $Q_t=0$, and a
reflection coupling if $Q_t\neq 0$ and $\alpha |Z_t|+|Q_t|<R_1$ with
appropriate constants $\alpha, R_1\in (0,\infty )$, cf.\ Figure
\ref{figCoupling}. The synchronous coupling guarantees that the noise
coefficient in \eqref{eq:AB} vanishes if $Q_t=0$, i.e., the dynamics is not driven away from the ``contractive region'' by
random fluctuations (although it may leave this region by the drift).
On the other hand, the reflection coupling for $Q_t\neq 0$ ensures that
the contractive region is recurrent. The resulting coupling process is a
diffusion on $\mathbb R^{4d}$ that is sticky on the $3d$-dimensional
hyperplane $\{ (x,v,x',v')\in\mathbb R^{4d}:x-x'+\gamma^{-1}(v-v')=0\}$
of contractive states, i.e., it spends a positive amount of
time in this region, cf.\ \cite{sticky}. Since the
construction and control of the sticky couplings described above is possible
but technically involved, we actually use approximations of such couplings to derive our results, see \eqref{coupling}. 
By designing a special semi-metric $\rho$ on $\mathbb R^{2d}$ that is based on a concave function of the
distance and on a Lyapunov function, we can then (similarly as in \cite{2016arXivE})  make use of the random fluctuations and of a drift
condition in order to derive average contractivity for the coupling distance
$\rho ((X_t,V_t),(X'_t,V'_t))$.
The construction of a coupling and the proof of contractivity are carried
out rigorously in Sections \ref{sec:coupling} and \ref{sec:contrregion}. 
 
\begin{center}
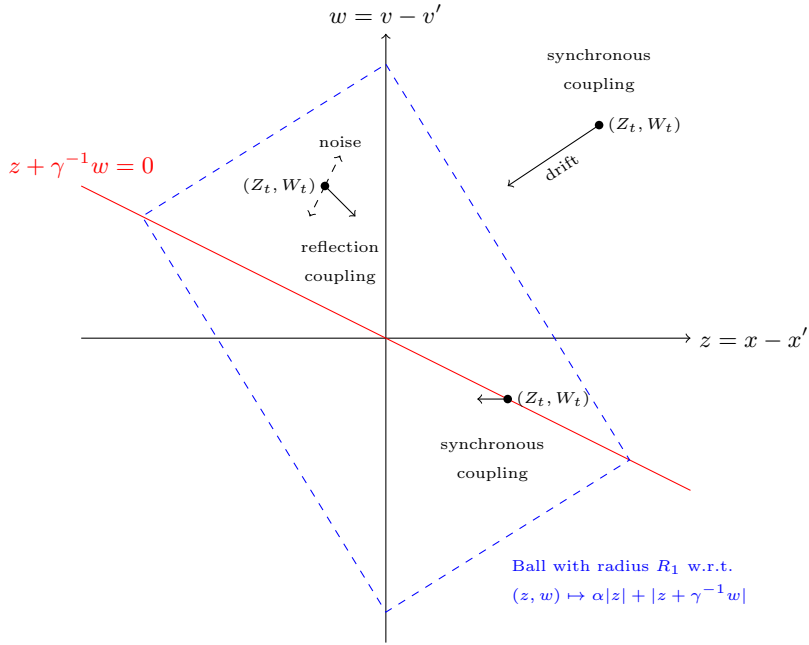
\begin{figure}
\begin{tikzpicture}[scale=0.81] 
\draw[->] (-5,0) coordinate (xAxisNeg) -- (5,0) node (xAxisPos)  [right] {$z=x-x'$};
\draw[->] (0,-5) coordinate (yAxisNeg) -- (0,5) node (yAxisPos) [above] {$w=v-v'$};
\draw[-,  red] (-5,2.5) node (ContrLine) [above] {$z+\gamma^{-1} w = 0$} -- (5,-2.5);
\draw[-,  blue, dashed=blue] (0,4.5) -- (4,-2) -- (0,-4.5) --(-4,2) -- (0,4.5);
\node[align=left] at (4,-4) {\color{blue}\tiny{Ball with radius $R_1$
w.r.t.}\\
\color{blue}\tiny{$(z,w)\mapsto \alpha |z|+|z+\gamma^{-1} w|$}};

\coordinate [label={right: \tiny{$(Z_t,W_t)$}}] (A1) at (3.5,3.5);
\fill[black] (A1) circle (2pt) ;
\coordinate (A2) at (3.5,4.4);
\node[align=center] at (A2)  {\tiny{synchronous}\\\tiny{coupling}};
\coordinate (A3) at (2,2.5);
\path (A1) edge[draw=black,->]
node[sloped,anchor=center,below] {\tiny{drift}} (A3); 

\coordinate [label={left: \tiny{$(Z_t,W_t)$}}] (B1) at (-1,2.5);
\fill[black] (B1) circle (2pt) ;
\coordinate [label={above: \tiny{noise}}] (B2) at (-0.75,3);
\path (B1) edge[draw=black,->,dashed] (B2);
\draw[->, dashed] (B1)  -- (-1.25,2);
\node[align=center] at (-0.75,1.25)  {\tiny{reflection}\\\tiny{coupling}};
\coordinate (B3) at (-0.5,2);
\path (B1) edge[draw=black,->] (B3); 

\coordinate [label={right: \tiny{$(Z_t,W_t)$}}] (C1) at (2,-1);
\fill[black] (C1) circle (2pt) ;
\coordinate (C2) at (1.75,-2);
\node[align=center] at (C2)  {\tiny{synchronous}\\\tiny{coupling}}; \draw[->] (C1)  -- (1.5,-1);

\end{tikzpicture}
\caption{Sketch of coupling approach\label{figCoupling}}
\end{figure}
\end{center}

\subsection{Drift condition and Lyapunov function}

We now make the following assumption that guarantees, among other things, that the process is non-explosive:
\begin{assumption}\label{ass:1}
There exist constants $L,A\in (0,\infty )$ and $\lambda\in (0,1/4]$ such that
\begin{align}
\label{A0a}U(x)\ &\ge \ 0 \qquad\mbox{for all }x\in\mathbb R^d,\\
\label{A1a}|\nabla U(x)-\nabla U(y)| \ &\le \ L\, |x-y|\qquad
\mbox{for all }x,y\in\mathbb R^d,\quad\mbox{and}\\
\label{A2}
x\cdot\nabla U(x)/2 \ &\ge \ \lambda\, (U(x)+ u^{-1}\gamma^2|x|^2/4)-A\quad\mbox{for all }x\in\mathbb R^d.
\end{align}
\end{assumption}
Notice that the assumption
can only be satisfied if
\begin{equation}
\label{lambda1}
\lambda\ \le\ 2Lu\gamma^{-2}.
\end{equation}
Up to the choice of the constants, the drift condition \eqref{A2} is equivalent to the 
simplified drift condition \eqref{A2prime} considered further below. It
implies the existence of a Lyapunov function for the Langevin process. Indeed, let
\begin{equation}
\label{H}
\lyp(x,v)\ =\ U(x)\, +\, \frac 14u^{-1}\gamma^2\, \left(
|x+\gamma^{-1}v|^2+|\gamma^{-1}v|^2-\lambda |x|^2\right) . 
\end{equation}
Note that since $\lambda\le 1/4$,
\begin{eqnarray}
\nonumber \lyp(x,v) &\ge & U(x)\, +\, \frac 14 (1-2\lambda )u^{-1}\gamma^2\left(
|x+\gamma^{-1}v|^2+|\gamma^{-1}v|^2\right)\\
\label{H1}&\ge &\frac 18 (1-2\lambda )u^{-1}\gamma^2 |x|^2.
\end{eqnarray}
In particular, $\lyp(x,v)\to\infty$ as $|(x,v)|\to\infty $.
Moreover:

\begin{lemma}\label{lem:L}
If the drift condition \eqref{A2} holds then $\, \mathcal L\lyp\,\le\,\gamma\, (d+A-\lambda \lyp)$.
\end{lemma}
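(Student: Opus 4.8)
The plan is to apply the generator $\mathcal L$ in \eqref{Gen} directly to the Lyapunov function $\lyp$ in \eqref{H}, collect terms, and show that the drift condition \eqref{A2} exactly absorbs the bad contributions. First I would compute $\mathcal L U(x)$: since $U$ depends only on $x$, only the transport term $v\cdot\nabla_x$ survives, giving $\mathcal L U = v\cdot\nabla U(x)$. Next I would handle the quadratic part $\Phi(x,v) := \tfrac14 u^{-1}\gamma^2\bigl(|x+\gamma^{-1}v|^2+|\gamma^{-1}v|^2-\lambda|x|^2\bigr)$ by expanding into the three pieces and applying $\mathcal L$ to each monomial $|x|^2$, $|v|^2$, $x\cdot v$ using the explicit form of $\mathcal L$ — the Laplacian $\Delta_v$ contributes only to the $|v|^2$ terms (yielding dimensional constants $2d$), the drift terms $-\gamma v\cdot\nabla_v$, $-u\nabla U\cdot\nabla_v$ and the transport term $v\cdot\nabla_x$ contribute the rest.

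The key algebraic point I would anticipate is that the new coordinate $q = x+\gamma^{-1}v$ was chosen precisely so that the $\nabla U$-terms cancel favorably: writing $\lyp$ in terms of $x$ and $q$, one has $\lyp = U(x) + \tfrac14 u^{-1}\gamma^2(|q|^2 + |\gamma^{-1}(q-x)\gamma|^2\cdot\gamma^{-2}\ldots)$ — more cleanly, $\gamma^{-1}v = q-x$, so the $|\gamma^{-1}v|^2$ term is $|q-x|^2$ and the $-\lambda|x|^2$ term stays. Then $dq$ has drift $-u\gamma^{-1}\nabla U(x)$ (the $-\gamma v$ and $+v$ in position cancel against the friction), so $\mathcal L(\tfrac14 u^{-1}\gamma^2|q|^2)$ produces a term $-\tfrac12 q\cdot\nabla U(x)\cdot\gamma\cdot\ldots$ that, combined with $\mathcal L U = v\cdot\nabla U = \gamma(q-x)\cdot\nabla U(x)$, collapses to something proportional to $-\tfrac{\gamma}{2}x\cdot\nabla U(x)$ plus lower-order quadratic-in-$v$ and cross terms. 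After collecting, the $U(x)$ and $|x|^2$ pieces coming out should be exactly $-\gamma\lambda\bigl(U(x)+u^{-1}\gamma^2|x|^2/4\bigr)$ up to the $+\gamma A$ allowed by \eqref{A2}, while the remaining quadratic form in $(x,v)$ (or $(q,x)$) should be bounded above by $-\gamma\lambda\cdot(\text{quadratic part of }\lyp) + \gamma d$, using $\lambda\le 1/4$ to guarantee the relevant quadratic form is negative semidefinite after subtracting $\lambda$ times itself.

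The main obstacle will be the bookkeeping in the last step: after substituting \eqref{A2} to replace $x\cdot\nabla U(x)/2$, one is left with an explicit quadratic form in the variables $x$ and $v$ (equivalently $x$ and $q$) with coefficients involving $\gamma$, $u$, $\lambda$, and one must verify that this form is dominated by $-\lambda$ times the quadratic part of $\lyp$ modulo the constant $\gamma d$ from the Laplacian. I would diagonalize or complete the square in the $(x,q)$-variables, and the constraint $\lambda\le 1/4$ (already used in \eqref{H1}) should be precisely what makes the cross terms manageable and the resulting $2\times 2$ (block) matrix negative semidefinite. I expect the dimensional constant to enter only through $\Delta_v|\gamma^{-1}v|^2 = 2d\gamma^{-2}$ and $\Delta_v|x+\gamma^{-1}v|^2 = 2d\gamma^{-2}$, giving total $u\gamma^2\cdot\tfrac14\cdot 2\cdot 2d\gamma^{-2}\cdot u^{-1}\cdot\gamma = \gamma d$ after multiplying by the $u\gamma$ prefactor of $\Delta_v$ — consistent with the claimed bound $\gamma(d+A-\lambda\lyp)$.
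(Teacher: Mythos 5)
Your plan follows essentially the same route as the paper's proof: apply $\mathcal L$ termwise to $\lyp$ (obtaining $\mathcal L\lyp = \tfrac12\gamma\bigl(2d - x\cdot\nabla U(x) - u^{-1}|v|^2 - \lambda u^{-1}\gamma\,x\cdot v\bigr)$ after the $\nabla U$--terms from $U$, $|x+\gamma^{-1}v|^2$ and $|\gamma^{-1}v|^2$ collapse to $-\tfrac12\gamma\,x\cdot\nabla U$, and the dimensional constant enters as $\gamma d$ exactly as you predict), then substitute the drift condition \eqref{A2} for $x\cdot\nabla U$ and compare with $\gamma(d+A-\lambda\lyp)$. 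The only place you over-anticipate is the final comparison: no diagonalization or block negative-semidefiniteness argument is required, since after substituting \eqref{A2} the $x\cdot v$ cross terms on the two sides match identically, and the leftover $|x|^2$ and $|v|^2$ terms compare coefficientwise using merely $0<\lambda\le 1$ (the stronger $\lambda\le 1/4$ is needed for \eqref{H1} but not in this lemma).
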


The proof of the lemma is included in Appendix \ref{app:Lyapunov}. The choice of the
Lyapunov function is motivated by Mattingly, Stuart and Higham
\cite{MR1931266}, see also \cite{MR1924934,MR1807683,BCG08}.
In combination
with \eqref{H1}, the lemma shows that the process $\lyp(X_t,V_t)$ is decreasing on average in regions where $$|X_t|\ \ge\ 8^{1/2}(d+A)^{1/2}u^{1/2}\gamma^{-1}(\lambda
-2\lambda^2)^{-1/2}.$$

\subsection{Choice of metric}\label{sub:metric}

Next, we introduce an appropriate semi-metric on $\mathbb R^d$ w.r.t.\
which the coupling considered below will be contractive on average. Inspired by \cite{MR2857021}, a similar semi-metric
has been considered in \cite{2016arXivE}.
For $(x,v),(x',v')\in\mathbb R^{2d}$ we set
\begin{eqnarray}
\label{rDist}\qquad \ r((x,v),(x',v')) &=& \alpha |x-x'|+|x-x'+{\gamma}^{-1}(v-v')|\\
\label{rho}\qquad \ \rho ((x,v),(x',v')) &=& f(r((x,v),(x',v')))\cdot (1+ \epsilon
\lyp(x,v)+\epsilon \lyp(x',v')), 
\end{eqnarray}
where $\alpha ,\epsilon\in (0,\infty )$ are appropriately chosen
positive constants, and $f:[0,\infty )\to [0,\infty )$ is a continuous,
non-decreasing concave function such that $f(0)=0$, $f$ is $C^2$
on $(0,R_1)$ for some constant $R_1\in (0,\infty )$ with 
right-sided derivative $f'_+(0)=1$ and left-sided derivative
$f'_-(R_1)>0$, and $f$ is constant on $[R_1,\infty )$. The 
function $f$ and the constants $\alpha ,\epsilon $ and $R_1$ will
be chosen explicitly below in order to optimize the resulting 
contraction rates. For the moment let us just note that by concavity,
\begin{equation}
\label{boundf}
\min (r,R_1)\, f'_{-}(R_1)\le f(r) \le\min (r,f(R_1)) \le
\min (r,R_1) \quad\mbox{for }r\ge 0.
\end{equation}
For probability measures $\mu ,\nu$ on $\mathbb R^{2d}$ we define
\begin{equation}
\label{0b}
\mathcal W_\rho (\mu ,\nu )\ =\ \inf_{ \Gamma \in \Pi (\mu ,\nu )}
\int \rho ((x,v),(x',v'))\; \Gamma  (d(x,v)\, d(x',v'))
\end{equation} 
where the infimum is over all couplings of $\mu $ and $\nu$. We remark
that $\rho$ and the transportation cost $\mathcal W_\rho$
are semimetrics but not necessarily metrics, i.e., the triangle inequality may be violated. An important remark is that the distance $r$ can be controlled by the Lyapunov function. Indeed, let
\begin{equation}
\label{R1}R_1\ :=\ \left( 16\cdot\left( 6/5\right)\cdot \left( 1+2\alpha
+2\alpha^2\right)\left( d+A\right)u\gamma^{-2}\, 
\left( \lambda -2\lambda^2\right)^{-1}\right)^{1/2}. 
\end{equation}
By \eqref{H1} and since $U\ge 0$,
\begin{align}\label{ML}
\lefteqn{r((x,v),(x',v'))^2\ \le\ \left( (1+\alpha )|x-x'+\gamma^{-1}(v-v')|
+\alpha |\gamma^{-1} (v-v')|\right)^{2}}\\\nonumber
\qquad \ &\le \ 2\, ((1+\alpha )^2+\alpha^2)\left(
|x+\gamma^{-1}v|^2+|x'+\gamma^{-1}v'|^2+|\gamma^{-1}v|^2+|\gamma^{-1}v'|^2\right)\\
\nonumber \ &\le \  8\, ((1+\alpha )^2+\alpha^2)(1-2\lambda )^{-1}
u\gamma^{-2}\, \left( \lyp(x,v)+\lyp(x',v')
\right)
\end{align}
for any $(x,v),(x',v')\in\mathbb R^{2d}$. Hence for 
$r((x,v),(x',v'))\ge R_1$,
\begin{eqnarray}
\label{H2}\lyp(x,v)+\lyp(x',v') &\ge &\frac{12}{5	}(d+A)/\lambda ,
\qquad \mbox{and thus}\\
\label{Lyap}\mathcal L \lyp(x,v)+\mathcal L \lyp(x',v') &\le &
-\frac 16\, \gamma\lambda\, \left( \lyp(x,v)+\lyp(x',v')\right)
\end{eqnarray}
by Lemma \ref{lem:L}. The bound \eqref{Lyap} guarantees that for
the coupling to be considered below, the process
$\rho_t:=\rho ((X_t,V_t),(X'_t,V'_t))$ is decreasing on average if
$r_t:=r((X_t,V_t),(X'_t,V'_t))\ge R_1$. We show that by choosing the coupling and the parameters $\alpha ,\epsilon$ and $f$
defining the metric in an adequate way, we can ensure that $\rho_t$
is also decreasing on average (up to a small error term) for
$r_t<R_1$. As a consequence, we will obtain our basic 
contraction result.

\subsection{Main contraction result}

We can now state our main result:
\begin{thm}\label{thm:1}
Suppose that Assumption \ref{ass:1} is satisfied.
Then there exist constants $\alpha, \epsilon\in (0,\infty )$ and a 
continuous non-decreasing concave function $f:[0,\infty )\to
[0,\infty )$ with $f(0)=0$ such that for all probability measures $\mu ,\nu$ on $\mathbb R^{2d}$,
\begin{equation}
\label{contr}
\mathcal W_\rho (\mu p_t,\nu p_t)\ \le \ e^{-ct}\,\mathcal W_\rho
(\mu ,\nu )\qquad\mbox{for any }t\ge 0,
\end{equation}
where the contraction rate $c$ is given by
\begin{align}
\label{rate}
c\ &=\  \frac{\gamma}{384}\,\min\left( \lambda L u \gamma^{-2},\,
\Lambda^{1/2}e^{-\Lambda}Lu\gamma^{-2},\, \Lambda^{1/2}e^{-\Lambda}\right)\qquad\mbox{with}\\
\label{Lambda}\Lambda \ &:= \ LR_1^2/8\ =\ \frac{12}{5}
(1+2\alpha +2\alpha^2 )(d+A)Lu\gamma^{-2}\lambda^{-1}(1-2\lambda )^{-1}.
\end{align}
Explicitly, one can choose
 the constants $\alpha ,\epsilon $, and
the function $f$ determining $\rho$ in such a way that
\begin{equation}
\label{alphaeps}\alpha\ =\ (1+\Lambda^{-1})Lu\gamma^{-2}\ \le \ 
\frac{11}{6	}Lu\gamma^{-2},\qquad \epsilon\ =\ 4\gamma^{-1}c/(d+A),
\end{equation}
and $f$ is constant on $[R_1,\infty )$ and $C^2$ on $(0,R_1)$ with
\begin{align}\label{fboundsTheorem1}
\frac 12 e^{-2}\exp (-Lr^2/8)\ \le\ f'(r)\ \le\ \exp (-Lr^2/8)\quad
\mbox{for }r\in (0,R_1).
\end{align}
More precisely, $f$ is defined by \eqref{f1}, \eqref{phi1} and
\eqref{g1} below.
\end{thm}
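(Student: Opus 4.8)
The plan is to construct the coupling of two Langevin solutions explicitly as described in the introduction, and then to show that the associated coupling distance $\rho_t=\rho((X_t,V_t),(X'_t,V'_t))$ satisfies $\E[\rho_t]\le e^{-ct}\E[\rho_0]$, from which \eqref{contr} follows by taking the infimum over initial couplings. First I would set up the coupling: given the current difference coordinates $(Z_t,W_t)$, equivalently $(Z_t,Q_t)$ with $Q_t=Z_t+\gamma^{-1}W_t$, I let the two driving Brownian motions be coupled by reflection in the direction $e_t:=Q_t/|Q_t|$ when $Q_t\neq 0$ and $r_t<R_1$, and synchronously otherwise. Since sticky behaviour on $\{Q=0\}$ is hard to make rigorous, I would instead regularise: replace the reflection/synchronous switch by a Lipschitz interpolation governed by a small parameter $\delta$ (a ``rc'' function that is $1$ for $|Q|\ge\delta$ and vanishes at $Q=0$), obtaining for each $\delta>0$ a genuine coupling by SDE with locally Lipschitz coefficients, derive the contraction estimate with a $\delta$-dependent error, and let $\delta\downarrow 0$ at the end. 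Non-explosivity of the coupled process follows from Lemma~\ref{lem:L} applied to $\lyp(X_t,V_t)+\lyp(X'_t,V'_t)$.

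Next I would compute the Itô dynamics of $r_t$ and then of $f(r_t)$. From \eqref{eq:AA}--\eqref{eq:AB}, $d|Z_t|\le(-\gamma|Z_t|+\gamma|Q_t|)\,dt$ (away from $Z_t=0$, with the usual care), and $|Q_t|$ has drift bounded using $|\nabla U(X_t)-\nabla U(X'_t)|\le L|X_t-X'_t|=L|Z_t|$ together with the noise term $2\sqrt{2u\gamma^{-1}}\,dB^1_t$ coming from the reflection coupling (the factor $2$ from reflection), which contributes a nondegenerate diffusion coefficient on $|Q_t|$ precisely on the region where reflection is active. Combining, $r_t=\alpha|Z_t|+|Q_t|$ has an infinitesimal drift of the form $(-\alpha\gamma|Z_t|+\alpha\gamma|Q_t|+uL\gamma^{-1}|Z_t|)\,dt+\dots$; choosing $\alpha$ so that $\alpha\gamma\ge uL\gamma^{-1}$, i.e. $\alpha\ge Lu\gamma^{-2}$ (hence the value in \eqref{alphaeps}), the dangerous $|Z_t|$ term is controlled and the remaining positive drift is $\le\alpha\gamma|Q_t|\le\alpha\gamma r_t$, with a diffusion coefficient $\ge 8u\gamma^{-1}$ on $\{0<|Q_t|,\ r_t<R_1\}$. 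Then by Itô's formula for the concave $f$, $df(r_t)\le f'(r_t)\,(\text{drift})\,dt+\tfrac12 f''(r_t)\,(\text{diffusion})\,dt+d(\text{mart.})$, and since $f''<0$ one extracts a negative contribution $\tfrac12 f''(r_t)\cdot 8u\gamma^{-1}$ that must dominate the positive drift $f'(r_t)\alpha\gamma r_t$.

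The crucial step is the design of $f$ and $\epsilon$ so that the full coupling distance contracts. On $\{r_t<R_1\}$ I want $f''(r)\cdot 4u\gamma^{-1}+f'(r)\alpha\gamma r\le -\tfrac{c}{?}f(r)$ up to a term absorbed by the Lyapunov part; this is an ODE inequality for $f$ that, after substituting $\alpha\le\tfrac{11}{6}Lu\gamma^{-2}$, becomes essentially $f''(r)+\tfrac{L r}{4}f'(r)\le -(\text{const})\,f(r)$, whose solution is built from $\phi(r)=\exp(-Lr^2/8)$: one sets $f'(r)=\phi(r)\cdot g(r)$ with $g$ a suitable non-increasing positive profile making $g'$ absorb the $-(\text{const})f$ term (this is where formulas \eqref{f1}, \eqref{phi1}, \eqref{g1} and the bounds \eqref{fboundsTheorem1} come from, with the $\tfrac12 e^{-2}$ accounting for $\int_0^{R_1}\phi$ versus $\phi(R_1)$). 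On $\{r_t\ge R_1\}$, $f$ is constant so $f(r_t)=f(R_1)$ is maximal and $df(r_t)\le0$ from that side, while the Lyapunov prefactor $1+\epsilon\lyp(X_t)+\epsilon\lyp(X'_t)$ is strictly decreasing on average there by \eqref{Lyap}; choosing $\epsilon=4\gamma^{-1}c/(d+A)$ balances the two contributions in the product rule $d\rho_t=d(f(r_t))\cdot(1+\epsilon\lyp+\epsilon\lyp')+f(r_t)\cdot\epsilon\,d(\lyp+\lyp')$ so that $\rho_t$ is a supermartingale after multiplication by $e^{ct}$; indeed on $\{r\ge R_1\}$ the $\epsilon$-Lyapunov decay gives a rate $\ge\tfrac16\gamma\lambda$ modulo $\epsilon$ errors, while on $\{r<R_1\}$ the concavity mechanism gives the rate $c$ in \eqref{rate}, and the three terms in the minimum in \eqref{rate} arise from the three competing regimes ($r\approx0$ using $f'(0)=1$, the interior using $\Lambda^{1/2}e^{-\Lambda}$, and the boundary $r\approx R_1$).

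I expect the main obstacle to be making the sticky/reflection coupling rigorous and passing to the limit $\delta\downarrow0$ while keeping uniform control of the error term and of the Lyapunov moments; a secondary difficulty is verifying the ODE inequality for $f$ with the explicit constants so that it closes simultaneously against the positive drift $\alpha\gamma r f'$ and against the cross term $f(r)\epsilon|\mathcal L\lyp|$ on $\{r<R_1\}$ (where $\lyp$ need not be decreasing), which forces the precise choice of $R_1$ in \eqref{R1} via \eqref{ML}. Once $\E[e^{ct}\rho_t]$ is shown non-increasing for the regularised couplings, letting $\delta\to0$ and optimising over couplings of $\mu,\nu$ yields \eqref{contr}; the explicit $\alpha,\epsilon,f$ claimed in the theorem are exactly those produced by this optimisation, and the deduction of convergence in $L^2$-Wasserstein distance (mentioned in the Example) follows from comparing $\rho$ with the Euclidean distance using \eqref{boundf} and \eqref{H1}.
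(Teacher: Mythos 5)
Your proposal follows the paper's own strategy almost line by line: Lipschitz-regularized reflection/synchronous coupling (the paper's parameter $\xi$ is your $\delta$), It\^o analysis of $|Z_t|,|Q_t|,r_t$, It\^o--Tanaka for $f(r_t)$, a Lyapunov prefactor $G_t=1+\epsilon\lyp+\epsilon\lyp'$, a case split on $r_t<R_1$ versus $r_t\ge R_1$, choice of $\alpha>Lu\gamma^{-2}$, the same ODE-design for $f$ built from $\varphi(r)=e^{-Lr^2/8}$ modulated by a profile $g$, $\epsilon=4\gamma^{-1}c/(d+A)$, a localization/Fatou argument, and the limit $\delta\downarrow0$. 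This is precisely the route of Sections~2--4 of the paper. Two concrete omissions in your sketch would, if carried through naively, lead to an incorrect constant rather than a proof of \eqref{rate}.

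First, your product rule $d\rho_t=d(f(r_t))\cdot G_t+f(r_t)\cdot\epsilon\,d(\lyp+\lyp')$ drops the quadratic covariation $d\langle f(r),G\rangle_t$, which is not zero: both $f(r_t)$ and $G_t$ carry noise driven by the same reflected increments $dB^{rc}_t$, and their covariation contributes a term of order $4\epsilon u\,f'_-(r_t)\,rc^2\,(\nabla_v\lyp(X_t)-\nabla_v\lyp(X'_t))^Te_t$ having the same sign as the positive drift you must beat. In the paper this is bounded via \eqref{B13} by $4\epsilon\max(1,(2\alpha)^{-1})\,rc^2\,r_tf'_-(r_t)$ and absorbed into the exponent of $\varphi$ in \eqref{phi1}, which is exactly why the additional constraint \eqref{44} on $\epsilon$ is imposed. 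Your discussion of a ``cross term $f(r)\epsilon|\mathcal L\lyp|$'' is the $f(r_t)\,dG_t$ drift term, not this covariation, so the omission is genuine. Second, on the transition region $\{0<|Q_t|<\delta,\ r_t<R_1\}$ the reflection coupling is switched off, so ``the concavity mechanism gives the rate $c$'' cannot be invoked there. The paper's Case~(ii) instead uses the strict slack $\alpha=(1+\eta)Lu\gamma^{-2}>Lu\gamma^{-2}$ together with $\alpha|Z_t|=r_t-|Q_t|\ge r_t-\delta$ to produce the contraction $-\tfrac{\eta}{1+\eta}r_tf'_-(r_t)$ up to an $O(\delta)$ error, which is what justifies condition \eqref{c2} and the third term in the minimum in \eqref{rate}. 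Your chosen $\alpha$ carries the required slack, but the sketch never uses it, so the $\delta$-dependent error you announce is not actually under control as written. Both gaps are repairable within your framework and are precisely what motivates the explicit definitions \eqref{phi1}, \eqref{g1} and the condition set \eqref{c0} in Theorem~\ref{thm:D}.
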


\begin{remark}
The constant $\Lambda$ depends on the parameters $L,u$ and
$\gamma$ both explicitly and through $\lambda $ and $\alpha$.
By \eqref{lambda1}, we always have
\begin{equation}
\label{Lambda1}\Lambda\ \ge \ 6(d+A)/5\ \ge\ 6/5.
\end{equation}
Corresponding upper bounds are given in Lemma \ref{lem:sdc}
below.
\end{remark}
\begin{remark}
We shortly comment on the requirement that $\nabla U$ is Lipschitz,
cf.\ Assumption \ref{ass:1} further above. This condition is not
necessary to conclude exponential convergence to equilibrium in Kantorovich
distances, cf.\ \cite[Theorem 3.2]{MR1931266}. We have chosen to limit
ourselves here to the Lipschitz case to concentrate on the key techniques rather than on
tedious calculations. In the case of overdamped Langevin
equations, the contraction results from \cite{2016arXivE} are extended in
\cite{zimmerPhD} replacing global Lipschitz bounds by local ones. In a similar
spirit, it might be possible to extend the results presented here. However,
optimizing and keeping track of the constants is more involved in this case.
\end{remark}

The proof of Theorem \ref{thm:1} is given in Section \ref{sec:optimize}. As a preparation, we introduce the
relevant couplings in Section \ref{sec:coupling}, and we apply these
to derive a more general contraction result in Section \ref{sec:contrregion}. Theorem \ref{thm:1} will be obtained from
this more general result by choosing the constants $\alpha$
and $\epsilon$ in a specific way.\medskip

Theorem \ref{thm:1} directly implies convergence of the Langevin process to a unique stationary distribution with exponential rate $c$:

\begin{coroll}\label{cor:1A}
In the setting of Theorem \ref{thm:1} there exists a constant 
$C\in(0,\infty)$ such that for all probability measures $\mu ,\nu$ on $\mathbb R^{2d}$,
\begin{equation}
\label{AB}\mathcal W^{2}(\mu p_t,\nu p_t)^2\ \le\
Ce^{-ct}\, \mathcal W_\rho (\mu ,\nu ) \qquad\text{for any } t\ge 0.
\end{equation}
Here, $\mathcal W^{2}$ denotes the standard $L^2$ Wasserstein distance w.r.t.\
the euclidean metric.
In particular, $\mu_\ast$ is the unique invariant probability measure for the Langevin
process, and $\mu p_t$ converges towards $\mu_\ast$ exponentially
fast with rate $c$ for any initial law $\mu$ such that $\mathcal W_\rho (\mu,\mu_\ast )<\infty$. Here, the constant $c$ and the semimetric
$\rho$ are given as in Theorem \ref{thm:1}, and the constant $C$ can be chosen explicitly as
$$
	C\ =\  2\, e^{2+\Lambda}\,  \frac{(1+\gamma)^2}{\min(1,\alpha)^{2} }\, \max\left(1, \,{4\, (1+2\alpha +2\alpha^2) \frac{(d+A)
u \gamma^{-1}c^{-1}}{ \min(1,R_1)}}\right)\, 
	  .
$$
\end{coroll}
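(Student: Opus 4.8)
The plan is to deduce Corollary~\ref{cor:1A} from the Kantorovich contraction \eqref{contr} of Theorem~\ref{thm:1} by comparing the semimetric $\rho$ both from below (to control the euclidean Wasserstein distance) and from above (to make the right-hand side finite and to identify the constant $C$). First I would establish a lower bound of the form $\rho((x,v),(x',v'))\ge c_1\, \min(1,\alpha,R_1)^2\, (|x-x'|^2+|v-v'|^2)/(1+\gamma)^2$ whenever the euclidean distance between the two points is small, and $\rho\ge f(R_1)\ge c_2 R_1$ when it is large, using \eqref{boundf} together with $f'_-(R_1)\ge \tfrac12 e^{-2}\exp(-LR_1^2/8)=\tfrac12 e^{-2-\Lambda}$ from \eqref{fboundsTheorem1}--\eqref{Lambda}. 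Since $1+\epsilon\lyp\ge 1$, the factor multiplying $f(r)$ in \eqref{rho} is $\ge 1$, so it suffices to bound $f(r)$ below by a multiple of $\min(r,r^2)$ and then to relate $r((x,v),(x',v'))$ to the euclidean distance: from \eqref{rDist}, $r\ge \alpha|x-x'|$ and $r\ge |(x-x')+\gamma^{-1}(v-v')|$, hence $|x-x'|+\gamma^{-1}|v-v'|\le (1+\alpha)\alpha^{-1} r$, which gives $|x-x'|^2+|v-v'|^2\le C'(1+\gamma)^2\min(1,\alpha)^{-2} r^2$. Squaring the resulting pointwise inequality $\rho\ge \text{const}\cdot\min(1,\text{euclidean distance}^2)$ and taking infima over couplings yields \eqref{AB} with the stated $C$; the $e^{2+\Lambda}$ and $(1+\gamma)^2/\min(1,\alpha)^2$ factors come precisely from inverting $f'_-(R_1)$ and the change of coordinates, and the $\max(1,\dots)$ term absorbs the case distinction between small and large distances through $R_1$ and $\Lambda=LR_1^2/8$.

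Next I would address the qualitative consequences. To see that $\mu_\ast$ is invariant, one uses that $\mu_\ast p_t=\mu_\ast$ (stated in the excerpt after \eqref{stat}); applying \eqref{contr} with $\nu=\mu_\ast$ shows $\mathcal W_\rho(\mu p_t,\mu_\ast)\le e^{-ct}\mathcal W_\rho(\mu,\mu_\ast)\to 0$ for any $\mu$ with $\mathcal W_\rho(\mu,\mu_\ast)<\infty$, and \eqref{AB} then upgrades this to convergence in $\mathcal W^2$. Uniqueness follows because any invariant $\tilde\mu$ with $\mathcal W_\rho(\tilde\mu,\mu_\ast)<\infty$ satisfies $\mathcal W_\rho(\tilde\mu,\mu_\ast)=\mathcal W_\rho(\tilde\mu p_t,\mu_\ast p_t)\le e^{-ct}\mathcal W_\rho(\tilde\mu,\mu_\ast)$, forcing $\mathcal W_\rho(\tilde\mu,\mu_\ast)=0$; one must check that $\mathcal W_\rho=0$ implies equality of measures, which holds since $\rho>0$ off the diagonal (as $f(r)>0$ for $r>0$ and the Lyapunov factor is $\ge 1$). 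For a general invariant measure one first notes $\int\lyp\,d\tilde\mu<\infty$ from Lemma~\ref{lem:L} (the drift condition $\mathcal L\lyp\le\gamma(d+A-\lambda\lyp)$ integrated against $\tilde\mu$ gives $\int\lyp\,d\tilde\mu\le(d+A)/\lambda$), hence $\mathcal W_\rho(\tilde\mu,\mu_\ast)<\infty$ and the previous argument applies.

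The main obstacle is bookkeeping the constants so that they match the explicit formula for $C$ — in particular keeping the dependence on $\Lambda$, $\alpha$, $R_1$ and $\gamma$ sharp while juggling the two regimes (euclidean distance small vs.\ large) and the concavity bound \eqref{boundf}. A secondary technical point is justifying the interchange that lets one pass from a pointwise inequality $\rho\ge \Phi(\text{euclidean distance})$ with $\Phi$ concave-then-linear to the Wasserstein comparison $\mathcal W^2(\mu p_t,\nu p_t)^2\le \text{const}\cdot\mathcal W_\rho(\mu p_t,\nu p_t)$: since $t\mapsto t^2$ is convex and the relevant lower bound on $\rho$ is of the form $\text{const}\cdot\min(s,s^2)$ which dominates $\text{const}\cdot s^2$ only after truncation, one should use that $\min(s,s^2)\ge \min(1,R')\,s^2$ fails for large $s$, so instead bound $\mathcal W^2(\,\cdot\,)^2\le \mathcal W^1$ of the truncated euclidean metric and control that by $\mathcal W_\rho$; alternatively, and more cleanly, bound $\min(1,s^2)\le \text{const}\cdot\rho$-type quantities and use that after time $t$ the $\mathcal W_\rho$ distance is already exponentially small, so only the small-distance (quadratic) regime matters asymptotically — but for a clean inequality valid for all $t\ge 0$ one simply keeps the $\max(1,\dots)$ in $C$ to cover both regimes. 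Everything else is routine, modulo careful tracking of the numerical factors in \eqref{fboundsTheorem1}, \eqref{R1} and \eqref{Lambda}.
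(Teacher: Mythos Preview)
Your small-distance argument is essentially the same as the paper's: you correctly relate the euclidean distance to $r$ via $|(x,v)-(x',v')|^2\le (1+\gamma)^2\max(1,\alpha^{-2})\,r^2$, and for $r\le\min(1,R_1)$ you use $r^2\le r\le f(r)/f'_-(R_1)\le 2e^{2+\Lambda}\rho$. That part is fine.

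The genuine gap is in the large-distance regime. You propose to use only $\rho\ge f(R_1)\ge c_2 R_1$, i.e.\ a \emph{constant} lower bound on $\rho$, and to treat the Lyapunov factor $1+\epsilon\lyp+\epsilon\lyp'$ merely as $\ge 1$. But a constant lower bound on $\rho$ cannot yield a pointwise inequality $|(x,v)-(x',v')|^2\le C\rho$, since the euclidean distance is unbounded while your lower bound for $\rho$ is not. Your suggested fixes in the last paragraph do not close this: the inequality $\mathcal W^2(\cdot)^2\le\mathcal W^1(\text{truncated metric})$ goes the wrong way, and the ``only small distances matter asymptotically'' heuristic does not give a bound for all $t\ge 0$. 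Keeping a $\max(1,\dots)$ in $C$ cannot help either, because the problem is not a missing constant factor but the absence of any control on large euclidean distances.

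The missing idea, which the paper uses, is that the Lyapunov factor in $\rho$ is precisely what controls large $r$. By \eqref{ML} one has $r^2\le 16(1+2\alpha+2\alpha^2)u\gamma^{-2}(\lyp(x,v)+\lyp(x',v'))$, and since $\lyp+\lyp'\le\epsilon^{-1}(1+\epsilon\lyp+\epsilon\lyp')$ and $\epsilon^{-1}=\gamma(d+A)/(4c)$, this gives for $r\ge\min(1,R_1)$
\[
r^2\ \le\ 4(1+2\alpha+2\alpha^2)(d+A)\,u\gamma^{-1}c^{-1}\,\rho\,/\,f(\min(1,R_1)).
\]
This is exactly where the factor $(1+2\alpha+2\alpha^2)(d+A)u\gamma^{-1}c^{-1}/\min(1,R_1)$ in the stated constant $C$ comes from; you quote this factor but do not derive it, and it cannot arise from your argument since you never invoke $\epsilon$ or the Lyapunov bound \eqref{ML}. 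Once this pointwise estimate is in place, combining the two cases gives $|(x,v)-(x',v')|^2\le C\rho$ everywhere, and then $\mathcal W^2(\mu p_t,\nu p_t)^2\le C\,\mathcal W_\rho(\mu p_t,\nu p_t)\le Ce^{-ct}\mathcal W_\rho(\mu,\nu)$ follows directly from Theorem~\ref{thm:1}. Your treatment of uniqueness and convergence to $\mu_\ast$ is correct.
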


The proof is given in
Section \ref{sec:optimize}.

\subsection{Bounds under simplified drift condition}

In order to make
the dependence of the bounds on the parameters more explicit, we now replace 
\eqref{A2} by a simplified drift condition. Instead of Assumption \ref{ass:1},
we assume:
\begin{assumption}\label{ass:2}
There exist constants $L,\mathcal R,\beta\in (0,\infty )$ such that
\begin{align}
\label{A0}U(0)\ =\ 0 \ &=\ \min U,\\
\label{A1}|\nabla U(x)-\nabla U(y)| \ &\le \ L\, |x-y|\qquad
\mbox{for any }x,y\in\mathbb R^d,\quad\mbox{and}\\
\label{A2prime}x\cdot\nabla U(x) \ &\ge \  \beta \cdot (|x|/\mathcal R)^2\quad\mbox{for any }x\in\mathbb R^d\mbox{ s.t.\ }|x|\ge\mathcal R.
\end{align}
\end{assumption}

Observe that \eqref{A0} may be assumed w.l.o.g.\ by subtracting a
constant and shifting the coordinate system such that the global
minimum of $U$ (which exists if \eqref{A2prime} holds) is attained at
$0$. The Lipschitz condition \eqref{A1} has been assumed before, 
and up to the values of the constants, the drift condition \eqref{A2prime} is
equivalent to \eqref{A2}. This condition guarantees that the $x$ marginal of the invariant probability measure \eqref{stat} concentrates on balls of radius
$O(\mathcal R)$. Notice that if \eqref{A1} and
\eqref{A2prime} are both satisfied then 
\begin{equation}
\label{Lbeta}\beta\ \le\ L\mathcal R^2.
\end{equation}


\begin{lemma}
\label{lem:sdc}
Suppose that Assumption \ref{ass:2} is satisfied.
Then Assumption \ref{ass:1} holds with
\begin{align}
A\ &=\  (L\mathcal R^2-\beta )/8\quad\mbox{and}\quad
\label{lambda3}\lambda \ =\ \min\left( \frac{1}{4},\, \frac{\beta}{L\mathcal R^2	}\cdot\frac{2Lu\gamma^{-2}}{1+2Lu\gamma^{-2}}\right) .
\end{align}
Furthermore, if $Lu\gamma^{-2}\le 1/8$ then the constant $\Lambda$ in Theorem \ref{thm:1} is bounded by 
\begin{equation}
\label{Lambda2}\frac 65 (d+A)L\mathcal R^2/\beta
\ \le\ \Lambda\ \le\ \frac 65 (d+A)(1+20\, Lu\gamma^{-2})\, L\mathcal R^2/\beta.
\end{equation}
In general, there is an explicit constant $C_1\in (0,\infty )$ such that
\begin{equation}
\label{Lambda3}\frac 65 (d+A)L\mathcal R^2/\beta
\ \le\ \Lambda\ \le\ \frac {12}5 (d+A)(1+C_1 Lu\gamma^{-2})^3\, L\mathcal R^2/\beta.
\end{equation}
\end{lemma}

The proof is included in Appendix \ref{app:Lyapunov}.
The lemma shows that if Assumption \ref{ass:2} holds with fixed constants $L,\mathcal R,\beta\in (0,\infty )$,
then there is a lower bound for the contraction rate in Theorem
\ref{thm:1} that only depends on the natural parameters $\gamma$,
$Lu\gamma^{-2}$, $L\mathcal R^2$ and $\beta$. The bound is
particularly nice if there is sufficient damping:

\begin{coroll}\label{cor:sdc}
Let $\ell\in [1,\infty )$, and suppose that Assumption \ref{ass:2} is satisfied with constants
$L,\mathcal R,\beta\in (0,\infty )$ such that $\beta\ge L\mathcal R^2/\ell$. Suppose further that $Lu\gamma^{-2}\le 1/30$. Then the assertion of Theorem \ref{thm:1} holds with a contraction rate 
\begin{align}
\nonumber
\ c \ &\ge \ \frac{\gamma }{205}\, \min\left(
\frac{1}{\ell }(Lu\gamma^{-2})^2,\, \frac 12\min\left( d^{1/2}Lu\gamma^{-2},\Lambda_1^{-1/2}\right)\, e^{-\Lambda_1}\right) \\
\ &\ge \ \frac{\sqrt{\beta u}}{38}\, \min\left(
\frac{1}{\ell }(Lu\gamma^{-2})^2,\, \frac 12\min\left( d^{1/2}Lu\gamma^{-2},\Lambda_1^{-1/2}\right)\, e^{-\Lambda_1}\right)\,  \mathcal R^{-1},\label{ratesdc}
\end{align}
where
$\Lambda_1\ :=\ (\ell -1)L\mathcal R^2/4+2\ell d$.
\end{coroll}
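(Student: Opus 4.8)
The plan is to derive Corollary \ref{cor:sdc} from Theorem \ref{thm:1} by combining Lemma \ref{lem:sdc} with the explicit formula \eqref{rate} for the contraction rate, and then inserting the hypotheses $\beta \ge L\mathcal R^2/\ell$ and $Lu\gamma^{-2}\le 1/30$. First I would record the consequences of these hypotheses: since $Lu\gamma^{-2}\le 1/30 \le 1/8$, the bound \eqref{Lambda2} of Lemma \ref{lem:sdc} applies, so $\Lambda \le \tfrac65 (d+A)(1+20\,Lu\gamma^{-2})L\mathcal R^2/\beta$; together with $L\mathcal R^2/\beta \le \ell$ and $20\,Lu\gamma^{-2}\le 2/3$ this gives an upper bound for $\Lambda$ in terms of $\ell$, $d$ and $A$. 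Using $A=(L\mathcal R^2-\beta)/8 \le (\ell-1)L\mathcal R^2/(8\ell)$ (from \eqref{AX} and the hypothesis) I would bound $\tfrac65 (d+A)L\mathcal R^2/\beta$ above by something comparable to $\tfrac14(\ell-1)L\mathcal R^2 + \tfrac65 d\ell$, i.e.\ by a constant times $\Lambda_1 = (\ell-1)L\mathcal R^2/4 + 2\ell d$; the factor $(1+20\,Lu\gamma^{-2})\le 5/3$ is absorbed into the numerical constant. This yields $\Lambda \le C\,\Lambda_1$ for an explicit constant, and also the lower bound $\Lambda \ge 6/5$ from \eqref{Lambda1}.

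Next I would handle each of the three terms in the minimum defining $c$ in \eqref{rate}. For the first term, $\lambda L u\gamma^{-2}$: by \eqref{lambda3}, since $2Lu\gamma^{-2}/(1+2Lu\gamma^{-2})\ge Lu\gamma^{-2}$ when $2Lu\gamma^{-2}\le 1$ (which holds), we get $\lambda \ge \min(1/4,\ (\beta/L\mathcal R^2)Lu\gamma^{-2}) \ge \min(1/4,\ \ell^{-1}Lu\gamma^{-2})$; since $Lu\gamma^{-2}\le 1/30$ and $\ell\ge 1$ the second argument is the smaller, so $\lambda L u\gamma^{-2}\ge \ell^{-1}(Lu\gamma^{-2})^2$, giving the first term in the asserted minimum (up to the constant). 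For the second and third terms, $\Lambda^{1/2}e^{-\Lambda}Lu\gamma^{-2}$ and $\Lambda^{1/2}e^{-\Lambda}$: here I would use that $x\mapsto x^{1/2}e^{-x}$ is decreasing for $x\ge 1/2$, so since $\Lambda \le C\Lambda_1$ and $\Lambda\ge 6/5$, one has $\Lambda^{1/2}e^{-\Lambda}\ge (C\Lambda_1)^{1/2}e^{-C\Lambda_1}\ge c'\Lambda_1^{-1/2}e^{-\Lambda_1}$ after a routine comparison — actually more care is needed since we want a clean $e^{-\Lambda_1}$ not $e^{-C\Lambda_1}$; the trick is that $\Lambda_1\ge 2d \ge 2$ forces $\Lambda_1$ away from the peak, and one can compare $\Lambda^{1/2}e^{-\Lambda}$ with $d^{1/2}Lu\gamma^{-2}\wedge \Lambda_1^{-1/2}$ times $e^{-\Lambda_1}$ by splitting according to whether the $d$ term or the $A$ term dominates in $\Lambda_1$. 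Combining the three bounds and tracking that $\gamma/384$ times the smallest numerical loss is at least $\gamma/205$ gives the first displayed inequality; the second follows by noting $\gamma = \gamma\sqrt{Lu\gamma^{-2}}/\sqrt{Lu\gamma^{-2}} = \sqrt{Lu}/\sqrt{Lu\gamma^{-2}}\cdot\ldots$, more directly $\gamma\cdot(Lu\gamma^{-2})^{1/2} = (Lu)^{1/2}\ge (\beta u)^{1/2}/\mathcal R \cdot (\mathcal R^2 L/\beta)^{1/2}\ge (\beta u)^{1/2}\mathcal R^{-1}$, so after factoring one power of $(Lu\gamma^{-2})^{1/2}$ out of the minimum one rewrites $\gamma = \gamma(Lu\gamma^{-2})^{1/2}(Lu\gamma^{-2})^{-1/2}$ and uses $\beta\le L\mathcal R^2$ to pass from $\gamma/205$ to $\sqrt{\beta u}\,\mathcal R^{-1}/38$, checking $205/38 \ge \sup$ of the relevant ratio.

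The main obstacle I expect is the second step: controlling $\Lambda^{1/2}e^{-\Lambda}$ cleanly from below by $\min(d^{1/2}Lu\gamma^{-2},\Lambda_1^{-1/2})e^{-\Lambda_1}$ rather than by something with $e^{-C\Lambda_1}$. The naive monotonicity argument loses a constant in the exponent, which would be fatal because the exponential dominates. The resolution has to exploit the precise structure $\Lambda_1 = (\ell-1)L\mathcal R^2/4 + 2\ell d$ versus $\Lambda \le \tfrac65(d+A)(1+20Lu\gamma^{-2})L\mathcal R^2/\beta$ with $A=(L\mathcal R^2-\beta)/8$: one expands the upper bound for $\Lambda$ as $\tfrac65 d\,\tfrac{L\mathcal R^2}{\beta}(1+\cdots) + \tfrac{3}{20}\tfrac{(L\mathcal R^2-\beta)}{\beta}L\mathcal R^2(1+\cdots)$ and compares the first summand with $2\ell d \ge 2\tfrac{L\mathcal R^2}{\beta}d$ and the second with $(\ell-1)L\mathcal R^2/4 = \tfrac14\tfrac{(L\mathcal R^2-\beta)}{\beta}\cdot\tfrac{\beta}{1}\cdot\ldots$; one sees each summand of $\Lambda$ is at most the corresponding summand of $\Lambda_1$ up to a factor close to $1$ (coming from $(1+20Lu\gamma^{-2})\le 5/3$ and $6/5$ versus $2$, resp.\ $3/20$ versus $1/4$), so in fact $\Lambda \le \Lambda_1$ outright, or very nearly, which is exactly why the authors chose the numerical constant in $\Lambda_1$ the way they did. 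Once $\Lambda\le\Lambda_1$ is established, monotonicity of $x^{1/2}e^{-x}$ on $[1/2,\infty)$ together with $\Lambda\ge 6/5 > 1/2$ gives $\Lambda^{1/2}e^{-\Lambda}\ge\Lambda_1^{1/2}e^{-\Lambda_1}$, and separately $\Lambda^{1/2}e^{-\Lambda}Lu\gamma^{-2}\ge$ a constant times $d^{1/2}Lu\gamma^{-2}e^{-\Lambda_1}$ using $\Lambda\ge\tfrac65(d+A)\ge\tfrac65 d$; taking the worse of the two recovers the stated $\tfrac12\min(d^{1/2}Lu\gamma^{-2},\Lambda_1^{-1/2})e^{-\Lambda_1}$, and the remaining arithmetic to pin down $205$ and $38$ is routine.
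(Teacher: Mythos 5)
Your overall route is the same as the paper's: use Lemma \ref{lem:sdc} to lower-bound $\lambda$ and to establish $d\le\Lambda\le\Lambda_1$, feed these into the rate \eqref{rate} from Theorem \ref{thm:1}, and then convert $\gamma/205$ into $\sqrt{\beta u}/(38\mathcal R)$ via $\gamma\ge\sqrt{30Lu}\ge\sqrt{30\beta u}/\mathcal R$. Your initial worry about only getting $\Lambda\le C\Lambda_1$ is resolved correctly: you notice that the hypothesis $\beta\ge L\mathcal R^2/\ell$ together with $A=(L\mathcal R^2-\beta)/8$ and the factor $(1+20Lu\gamma^{-2})\le 5/3$ makes each summand of the upper bound in \eqref{Lambda2} dominated by the corresponding summand of $\Lambda_1$, so $\Lambda\le\Lambda_1$ holds outright. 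That is precisely what the paper uses, and it is why no exponential loss occurs.

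There is, however, one numerical slip that breaks the stated constant. You bound
$$\lambda\ \ge\ \min\Bigl(\tfrac14,\ \tfrac{\beta}{L\mathcal R^2}\,Lu\gamma^{-2}\Bigr)\ \ge\ \ell^{-1}Lu\gamma^{-2},$$
dropping the factor $\tfrac{2}{1+2Lu\gamma^{-2}}$ in \eqref{lambda3}. But since $Lu\gamma^{-2}\le 1/30$ one actually has $\tfrac{2}{1+2Lu\gamma^{-2}}\ge\tfrac{2}{16/15}=\tfrac{15}{8}$, giving the sharper bound $\lambda\ge(15/(8\ell))Lu\gamma^{-2}$. That factor $15/8$ is not cosmetic: the first term of the rate in \eqref{rate} carries the prefactor $\gamma/384$, and $384\cdot 8/15=204.8\le 205$, so only the sharpened version of the $\lambda$ bound reaches the claimed $\gamma/205$. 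With your weaker bound $\lambda\ge\ell^{-1}Lu\gamma^{-2}$ you would only obtain $\gamma/384$, which is strictly smaller than $\gamma/205$, so the first displayed inequality in \eqref{ratesdc} would not follow. (The other two terms in the minimum have slack: there the factor $1/2$ in front of $\min(d^{1/2}Lu\gamma^{-2},\Lambda_1^{-1/2})e^{-\Lambda_1}$ covers the passage from $384$ to $410=2\cdot 205$.) Your last-step manipulation of $\gamma$ into $\sqrt{\beta u}\,\mathcal R^{-1}$ is more roundabout than needed and the phrase about ``factoring one power of $(Lu\gamma^{-2})^{1/2}$ out of the minimum'' does not make sense as stated; the direct route $\gamma\ge\sqrt{30Lu}\ge\sqrt{30\beta u}/\mathcal R$ together with $\sqrt{30}/205\ge 1/38$ is what is actually used.

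A minor remark: the third term in \eqref{rate} is a typo for $\Lambda^{-1/2}e^{-\Lambda}$ (this is what the proof of Theorem \ref{thm:1} actually yields). Your monotonicity argument happens to be robust to this, since from $\Lambda\le\Lambda_1$ and $\Lambda_1\ge 2$ one gets both $\Lambda^{1/2}e^{-\Lambda}\ge\Lambda_1^{1/2}e^{-\Lambda_1}\ge\Lambda_1^{-1/2}e^{-\Lambda_1}$ and, more directly, $\Lambda^{-1/2}e^{-\Lambda}\ge\Lambda_1^{-1/2}e^{-\Lambda_1}$.
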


The proof of the corollary is given in Section \ref{sec:optimize}. Bounds of the same order as in \eqref{ratesdc} hold if the constant $1/30$ is replaced by any other strictly positive constant. The specific value $1/30$ has been chosen in a somehow ad hoc way in order
to obtain relatively small constants in the prefactors.

\begin{remark}[Kinetic behaviour, Hamiltonian Monte Carlo]
Corollary \ref{cor:sdc} shows that by adjusting the 
friction coefficient $\gamma$ appropriately, one can obtain a kinetic 
lower bound for the contraction rate: If $\gamma$
is chosen such that the value of
$Lu\gamma^{-2}$ is a given constant, and the parameters $\beta$ and $\ell$ are fixed as well (i.e., $L\mathcal R^2$ is within a fixed range), then 
the lower bound for $c$ in \eqref{ratesdc}
is of order $\Omega
\left(\mathcal R^{-1}\right)$. This should be relevant for MCMC
methods based on discretizations of Langevin equations
\cite{neal, MR2681239, 2015arXiv151109382B},
because it indicates that by adjusting $\gamma$ appropriately,
one can improve on the diffusive order $O
\left(\mathcal R^{-2}\right)$ for the convergence rate to equilibrium.
\end{remark}

Before discussing the parameter dependence of the lower bounds for the contraction rate $c$  that have been stated above, 
we check the quality of the  bounds in the linear case and for 
drifts that are linear outside a ball:

\begin{example}[Linear drift]
Suppose that $U(x)=L|x|^2/2$. Then \eqref{Langevin} reads
\begin{eqnarray}
\label{LangevinG}\ \ dX_t &=&V_t\, dt,\qquad
 dV_t\ =\ -\gamma V_t\, dt\, -\,  u \, LX_t\, dt\,
+\, \sqrt{2\gamma u}\, dB_t.
\end{eqnarray}
Applying Corollary \ref{cor:sdc} with
$\beta =L\mathcal R^2$
and $\ell =1$ shows that for $Lu\gamma^{-2}\le 1/30$,
\begin{equation}
\label{cboundlinear}
c\ \ge\ \frac{\gamma}{205}\, \min\left(
(Lu\gamma^{-2})^2,\, \frac 12e^{-2d}\min\left( d^{1/2}Lu\gamma^{-2},(2d)^{-1/2}\right)\right) .
\end{equation} 
Lower bounds of a similar order can be derived
from Theorem \ref{thm:1} if $Lu\gamma^{-2}$ is bounded from above by a fixed constant.
On the other hand,  the linear Langevin equation \eqref{LangevinG} can be solved explicitly.
The solution is a Gaussian process. By \cite[Section 6.3]{MR3288096}, the $L^2$ spectral gap of the corresponding generator
is 
\begin{eqnarray}
\label{gap}
&&c_{gap} \  = \ \left(
1-\sqrt{(1-4Lu\gamma^{-2})^+}\right)\gamma/2,\qquad\mbox{and, in particular,}\\
\label{gap1} 
&&\gamma\, \min (1/4,Lu\gamma^{-2}) \ \le\  c_{gap}\ \le\  \gamma \, \min (1/2,2Lu\gamma^{-2}).
\end{eqnarray}
The spectral gap provides an upper bound for the 
contraction rate $c$. 
For example, for $d=1$ and $Lu\gamma^{-2}=1/30$, we obtain the lower bound
\begin{equation}
\label{cboundlineardim1}c\ \ge\ \frac{\gamma}{184500}\ \approx\ \frac{1}{33685}\, (Lu)^{1/2}
\end{equation}
for the contraction rate, whereas the upper bound given by the spectral gap is
$$c_{gap}\ =\ \frac 12\gamma\, (1-\sqrt{1-4/30})\ \approx\ \frac{\gamma}{29}\ \approx\ \frac{1}{5.3}(Lu)^{1/2}.$$
\end{example}

\begin{example}[Multi-well potentials, linear drift outside a ball]\label{ex:multiwell}
Assumption \ref{ass:2} with $\beta =L\mathcal R^2/2$ is satisfied for the one-dimensional double-well potential
\begin{equation}\nonumber
U(x)\ =\ \begin{cases}
L|x|^2/2 &\mbox{for }x\le\mathcal R/8,\\
-L(x-\mathcal R/4)^2/2+L\mathcal R^2/64 &\mbox{for }\mathcal R/8\le x\le3\mathcal R/8,\\
L(x-\mathcal R/2)^2/2&\mbox{for }x\ge 3\mathcal R/8,\\
\end{cases}
\end{equation}
and also for the triple-well potential $\tilde U(x)=U(|x|)$. Here,
for $Lu\gamma^{-2}\le 1/30$, Corollary \ref{cor:sdc} yields the lower bounds
\begin{align}\nonumber
\quad\ c \ &\ge \ \frac{\gamma}{410}\, \min\left(
(Lu\gamma^{-2})^2,\, \min\left( d^{1/2}Lu\gamma^{-2},(4d+L\mathcal R^2/4)^{-1/2}\right)\, e^{-4d-L\mathcal R^2/4}\right)\\
\ &\ge \ \frac{\sqrt{\beta u}}{75\,\mathcal R}\, 
\min\left(
(Lu\gamma^{-2})^2,\, \min\left( d^{1/2}Lu\gamma^{-2},(4d+L\mathcal R^2/4)^{-1/2}\right)\, e^{-4d-L\mathcal R^2/4}\right)\label{eq:lowerbounddw}
\end{align}
for the contraction rate. 
Again, lower bounds of similar order hold if $Lu\gamma^{-2}$ is bounded from above by a fixed constant. More generally,
we obtain corresponding bounds
if $U$ is a potential satisfying conditions \eqref{A0} and \eqref{A1}, and
there exist constants $\mathcal R\in\mathbb R_+$ and $a\in\mathbb R^d$
with $|a|\le\mathcal R/2$  such that $\nabla U(x)=L\, (x-a)$ for $|x|\ge\mathcal R$. The lower bound is of order $\Theta (\mathcal R^{-1})$ if $Lu\gamma^{-2}$
is fixed and $L\mathcal R^2$ is bounded from above by a fixed constant.
\end{example}

We stress that in low dimensions, we can obtain numerical values for our lower bounds that are in reach for current computer simulations. This is
quite remarkable because we have lost some factors during our estimates.
For high dimensions, our bounds deteriorate rapidly.

\subsection{Parameter dependence of lower bounds
for the contraction rate}

We now discuss the parameter dependence of the bounds derived above,
and we compare our results to previously derived bounds on convergence 
to equilibrium for kinetic Fokker-Planck equations.\smallskip

Let us first recall that 
the computation of the spectrum shows that in the linear case, there
are two different regimes, cf.\ \cite{MR3288096}. For $Lu\gamma^{-2}
\ge 1/4$ ({\em underdamped regime}), the spectral gap \eqref{gap} is a linear
function of $\gamma$. In this case, the friction coefficient $\gamma$
is so small that the rate of convergence to equilibrium is determined
by $\gamma$. Conversely, for $Lu\gamma^{-2}
\le 1/4$, the spectral gap is a decreasing function of $\gamma$.
In this regime, the rate of convergence to equilibrium is determined
by the transfer of noise from the $v$-variable to the $x$-variable.
If $\gamma$ increases then the noise is damped more strongly
before it can be transferred to the $x$-component, and hence the
rate of convergence decreases. In particular, the
spectral gap as a function of $\gamma$ has a sharp maximum
for $Lu\gamma^{-2}= 1/4$.\smallskip
\begin{figure}[h]
 \centering
\begin{center}
\includegraphics[height=4cm]{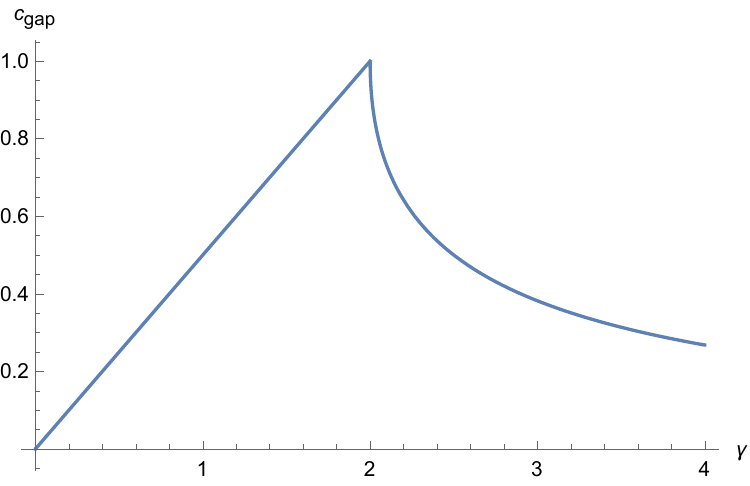} 
\end{center} 
\caption{Spectral gap for $U(x)=|x|^2/2$ and $u=1$}
 \end{figure}
 
Comparing \eqref{gap1} and 
\eqref{rate}, 
we see that our general lower bound for the contraction
rate contains similar terms as the bounds in \eqref{gap1}.
However, these terms are multiplied by constants that again
depend on the parameters $L,u$ and $\gamma$. We now discuss
the parameter dependence of the lower bounds in different regimes:
\smallskip\\
{\em $Lu\gamma^{-2}\to 0$ (overdamped case)}.
As $Lu\gamma^{-2}\to 0$, the lower bound in \eqref{ratesdc} is
of order $\Omega \left(\gamma (Lu\gamma^{-2})^2\right)$. This
differs from the order $\Theta \left(\gamma\, Lu\gamma^{-2}\right)$
of the spectral gap in the linear case by a factor $Lu\gamma^{-2}$.
\smallskip\\
{\em $Lu\gamma^{-2}$ fixed (kinetic case)}.
If the friction coefficient $\gamma$ is chosen such that the value of
$Lu\gamma^{-2}$ is a given constant, and the parameters $\beta$ and $L\mathcal R^2$ are fixed as well, then 
the lower bound in \eqref{ratesdc} is of order $\Omega
\left(\mathcal R^{-1}\right)$. 
\smallskip\\
{\em $Lu\gamma^{-2}\to \infty$ (underdamped case)}.
For large values of $Lu\gamma^{-2}$, our bounds for the
contraction rate in Lemma \ref{lem:sdc} are considerably worse
than the order $\Theta (\gamma )$ of the spectral gap in the linear
case. This is not surprising, because for small $\gamma$ it is not
clear, how the contractive term $-\gamma V_t$ in \eqref{Langevin}
can make up for the term $-u\nabla U(X_t)$ with a potential $U$
that may be locally very non-convex. 
\smallskip\\
{\em $L\mathcal R^{2}\to \infty $}.
For large values of $L\mathcal R^2$, the lower bound in
\eqref{ratesdc} degenerates exponentially in this parameter. This
is natural because $U$ could be a double-well
potential with valleys of depth $\Theta (L\mathcal R^2)$, see the example above.
\smallskip\\
{\em $d\to\infty$}. 
Our bounds depend exponentially on the dimension. In the general
setup considered here, this is unavoidable. An important open 
question is whether a better dimension dependence can be obtained
for a restricted class of models. For overdamped Langevin diffusions,
corresponding results have been obtained for example in 
\cite{EberlePTRF,zimmer16,zimmerPhD}.
\medskip
 
Let us now set our results in relation to explicit bounds for convergence to 
equilibrium of kinetic Fokker-Planck equations that have been obtained
in \cite{MR2034753,MR2562709,MR3324910} by analytic methods.
These results are not directly comparable, because they quantify
convergence to equilibrium in different distances (e.g.\ in weighted $L^2$ or
Sobolev norms, or in relative entropy). Moreover, the constants have not
always been tracked as precisely as here. 
Nevertheless, it seems plausible to compare the orders of the convergence rates. Already in \cite[Theorem 0.1]{MR2034753}, H\'erau and Nier have derived a nice explicit bound on the convergence rate in weighted 
Sobolev spaces under quite general conditions. However, even in the linear case, this bound is far
from sharp in the overdamped regime and at the boundary between
the overdamped and the underdamped regime. In particular, it 
seems not to be able to recover kinetic behavior for properly
adjusted friction coefficients. Most of the more recent works are
based on extensions of Villani's
hypocoercivity approach \cite{MR2562709}. In particular,
explicit bounds are given in \cite[Section 7]{MR2562709} in $H^1$ norms,
corresponding bounds in $L^2$ norms can be deduced from \cite{MR3324910}, and recently some Wasserstein bounds have been 
derived in \cite{MR3488535}. However, we have not been able to recover a similar
behaviour as above for the bounds on the convergence rates in these results.

\subsection{Outline of the proof}
To prove our main result we proceed in the following way. In Section
\ref{sec:coupling}, we precisely define the coupling that we consider. 
Having introduced both the coupling and the underlying distance function,
we study average contraction properties of the coupling distance $\rho_t$
by standard methods from stochastic analysis. To this end, we compute the
semimartingale decomposition of $e^{ct}\rho_t$ for a given constant $c>0$.
Exponential contractivity with rate $c$ holds if the resulting drift term is negative.
In Section \ref{sec:contrregion} we analyse under which conditions on the parameters this holds true in different regions of the state space for the coupling process. 
By \eqref{B6}, contractivity on the hyperplane where synchronous coupling is applied can only be expected provided $\alpha >Lu\gamma^{-2}$. This motivates setting $\alpha =(1+\eta )Lu\gamma^{-2}$ with $\eta >0$ in \eqref{alpha}.
To ensure that reflection coupling yields contractivity outside of this hyperplane,
one has to choose $f$ sufficiently concave. Intuitively, by applying a sufficiently 
concave function, we can turn the submartingale $r_t=\alpha |Z_t|+|Q_t|$ into a 
supermartingale. This approach has been used in several previous works
\cite{MR2843007,EberlePTRF,2016arXivE}, and carrying it out in an optimized way leads to the choice of $f$ given by \eqref{f1}, \eqref{phi1} and \eqref{g1}.
Having fixed $f$, one can now see that contractivity on the hyperplane holds if
$c$ is sufficiently small depending on $\eta$, cf.\ \eqref{c2}. Moreover, the Lyapunov condition implies contractivity at large distances if \eqref{c3} holds.
As a consequence, we obtain a global contraction result  with a contraction rate $c$ depending on the parameters in the definition of the metric, see Theorem \ref{thm:D}.
The final step of the proof of the main result then consists in choosing these parameters in order to maximize the resulting contraction rate approximately. This is carried out in Section \ref{sec:optimize}. Roughly, the constants $\alpha$ and $\epsilon$ in \eqref{rDist} and
\eqref{rho} are chosen sufficiently small such that the effects of the distortion of the metric do not destroy the contraction properties at small distances, but otherwise as large as possible. This is ensured by \eqref{42}, \eqref{42a} and 
\eqref{44}. With this choice of constants, the main result then follows from 
Theorem \ref{thm:D}.

\section{Coupling and evolution of coupling distance}\label{sec:coupling}

We fix a positive constant $\xi$. Eventually, we will consider the
limit $\xi\downarrow 0$.
In order to couple two solutions of \eqref{Langevin}, we consider 
the following SDE on $\mathbb R^{2d}\times\mathbb R^{2d}$:
\begin{eqnarray}\nonumber
dX_t &=&V_t\, dt,\\
\nonumber 
dV_t&=&-\gamma\,V_t\, dt\, \,-\,  u \, \nabla U(X_t)\, dt\,
+\, \sqrt{2\gamma u}\;rc(Z_t,W_t)\, dB_t^{rc}\\&&\label{coupling}
\qquad \qquad +\, \sqrt{2\gamma u}\;sc(Z_t,W_t)\, dB_t^{sc},\\
\nonumber dX_t^{\prime} &=&V_t^{\prime}\, dt,\\
\nonumber
dV_t^{\prime}&=&-\gamma\,V_t^{\prime}\, dt\, -\,  u \, \nabla U(X_t^{\prime})\, dt\,
+\, \sqrt{2\gamma u}\;rc(Z_t,W_t)\,(I_d-2e_te_t^T)\, dB_t^{rc}\\&&\nonumber\qquad \qquad
+\, \sqrt{2\gamma u}\;sc(Z_t,W_t)\, dB_t^{sc}.
\end{eqnarray} 
Here $B^{rc}$ and $B^{sc}$ are independent Brownian motions,
\begin{eqnarray}
\label{ZWQ}
Z_t &=& X_t-X_t^\prime ,\quad W_t\ =\ V_t-V_t^\prime ,\quad
Q_t\ =\ Z_t+\gamma^{-1}W_t,\\
\label{et}e_t& =& Q_t/|Q_t|\quad\mbox{if }Q_t\neq 0,\quad\mbox{and}\quad
e_t\ =\ 0\quad\mbox{if }Q_t=0.
\end{eqnarray}
Moreover, $rc, sc
:\mathbb R^{2d}\to [0,1]$ are Lip\-schitz continuous functions such that
$rc^2+sc^2\equiv 1$,
\begin{align}
\label{rc0}rc(z,w) \ &=\ 0\quad\mbox{if }z+  \gamma^{-1}w=0\quad\mbox{or}\quad \alpha |z|+|z+  \gamma^{-1}w|\ge R_1+\xi ,\\
\label{rc1}rc(z,w) \ &=\ 1\quad\mbox{if }|z+ \gamma^{-1}w|\ge\xi\quad\mbox{and}\quad \alpha |z|+|z+  \gamma^{-1}w|\le R_1 .
\end{align}
The values of the constants $\alpha ,R_1\in (0,\infty )$ 
will be fixed later.
Note that by \eqref{rc0}, $rc(Z_t,W_t)e_te_t^T$ is a Lipschitz continuous function of $(X_t,V_t,X_t^\prime ,V_t^\prime )$. Therefore, 
existence and uniqueness of the coupling process holds by It\^o's theorem.
Moreover, by L\'evy's characterization, for any solution of \eqref{coupling},
\begin{eqnarray*}
B_t&=&\int_0^t \,rc(Z_s,W_s)\, dB_s^{rc}\,
+\, \int_0^t sc(Z_s,W_s)\, dB_s^{sc}\qquad\mbox{and}\\
B_t^\prime &=&\int_0^t \,rc(Z_s,W_s)\,(I_d-2e_se_s^T)\,  dB_s^{rc}\,
+\, \int_0^t sc(Z_s,W_s)\, dB_s^{sc}
\end{eqnarray*}
are again Brownian motions. Thus \eqref{coupling} defines indeed
a coupling of two solutions of \eqref{Langevin}. For $rc\equiv 1$
and $sc\equiv 0$, the coupling is a {\em reflection coupling}, whereas for $rc\equiv 0$ and $sc\equiv 1$ it is a
{\em synchronous coupling}. By \eqref{rc0}
and \eqref{rc1}, we 
choose $rc$ and $sc$ such that the synchronous coupling is applied when $r_t=\alpha |Z_t|+|Q_t|$ is large or $Q_t$ is close to $0$, and the reflection coupling is applied otherwise. Ideally, we would like to choose $rc(Z_t,W_t)=1_{\{ Q_t\neq 0,\, r_t<R_1\} }$. Since the indicator function is not continuous, we consider Lipschitz continuous approximations instead.\medskip
 
The processes $(Z_t),(W_t)$ and $(Q_t)$ satisfy the following equations:
\begin{equation}
\label{Zt}dZ_t \ =\ W_t\, dt\ =\ \gamma Q_t\, dt\, -\, \gamma Z_t\, dt,
\end{equation}
$$\qquad dW_t = -\gamma W_t\, dt -  u  (\nabla U(X_t)-\nabla U(X_t^\prime ))\, dt
+ \sqrt{8\gamma u}\,rc(Z_t,W_t)e_te_t^T\,  dB_t^{rc},$$
\begin{equation}
\label{Qt}dQ_t =  - u\gamma^{-1}  (\nabla U(X_t)-\nabla U(X_t^\prime ))\, dt
+ \sqrt{8\gamma^{-1} u}\,rc(Z_t,W_t)\,e_te_t^T\,  dB_t^{rc}.
\end{equation}
Note that in particular, $Z_t$ is contractive when $Q_t=0$, and
$Q_t$ would be a local martingale if $U$ would be constant.
We set
\begin{eqnarray}
\label{r} r_t&:=&r((X_t,V_t),(X_t^\prime ,V_t^\prime ))\ =\  \alpha |Z_t|\, +\,
|Q_t|,\qquad\mbox{and}\\
\label{rhoa} \rho_t&:=&\rho ((X_t,V_t),(X_t^\prime ,V_t^\prime ))\ =\ 
f(r_t)\cdot G_t,\qquad\mbox{where}\\
\label{G} G_t&:=&1\, +\, \epsilon \lyp(X_t,V_t)\, +\, \epsilon
\lyp(X_t^\prime ,V_t^\prime ).
\end{eqnarray}

\begin{lemma}\label{lem:B}
Let $c,\epsilon \in (0,\infty )$, and suppose that $f:[0,\infty )\to [0,\infty )$
is continuous, non-decreasing, concave, and $C^2$ except for
finitely many points. Let
\begin{equation}
\label{alpha}\alpha \ =\ (1+\eta )Lu\gamma^{-2} 
\end{equation}
for some constant $\eta\in (0,\infty )$. Then
\begin{equation}
\label{rhot}
e^{ct}\rho_t\ \le  \ \rho_0\, +\, \gamma\, \int_0^te^{cs}K_s\, ds\, +\, M_t\qquad\mbox{for any }t\ge 0,
\end{equation}
where $(M_t)$ is a continuous local martingale, and
\begin{align}
\label{K} 
K_t \ &=\  4u\gamma^{-2}\, rc(Z_t,W_t)^2f''(r_t)\, G_t\\ \nonumber \ & +\, (\alpha
|Q_t|-\frac{\eta}{1+\eta}\alpha |Z_t|)\, f_-^\prime (r_t)\, G_t\, +\, 4\epsilon
\max(1,(2\alpha)^{-1})\,rc(Z_t,W_t)^2\, r_t f'_{-}(r_t)
	\\
\ &\nonumber  +\, \left( 2(d+A)-\lambda \lyp(X_t,V_t)-\lambda \lyp(X'_t,V'_t)\right)
\epsilon f(r_t)\, +\, \gamma^{-1}c\, f(r_t)\, G_t .
\end{align}
\end{lemma}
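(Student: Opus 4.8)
The plan is to apply Itô's formula to the semimartingale $e^{ct}\rho_t = e^{ct} f(r_t) G_t$ and to organize all the resulting terms into the quantity $K_t$ plus a local martingale. First I would compute $dr_t$ using \eqref{Zt} and \eqref{Qt}: since $r_t = \alpha |Z_t| + |Q_t|$, the difference $|Z_t|$ contributes (away from $Z_t=0$) the drift $\gamma(|Q_t|$-component$) - \gamma|Z_t|$ via $d|Z_t| = \frac{Z_t}{|Z_t|}\cdot dZ_t = (\gamma \frac{Z_t\cdot Q_t}{|Z_t|} - \gamma|Z_t|)\,dt$, bounded above by $\gamma(|Q_t| - |Z_t|)$; and $d|Q_t| = e_t\cdot dQ_t + \text{(Itô term)}$, where the diffusion term $\sqrt{8\gamma^{-1}u}\, rc(Z_t,W_t)\, e_te_t^T\,dB_t^{rc}$ is \emph{radial} in the $Q$-direction, so the Itô correction to $|Q_t|$ \emph{vanishes} (reflection coupling is along $e_t$), and the drift of $|Q_t|$ is controlled via \eqref{A1a} by $u\gamma^{-1}L|Z_t|$ — here is where the choice $\alpha=(1+\eta)Lu\gamma^{-2}$ enters, so that $\alpha\cdot(-\gamma|Z_t|)$ from $d|Z_t|$ plus $u\gamma^{-1}L|Z_t|$ from $d|Q_t|$ leaves a strictly negative coefficient $-\frac{\eta}{1+\eta}\alpha|Z_t|$. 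The quadratic variation $d[r]_t$ equals the quadratic variation of $|Q_t|$, which is $8\gamma^{-1}u\,rc(Z_t,W_t)^2\,dt$.

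Next I would apply Itô to $f(r_t)$: using concavity and that $f$ is $C^2$ except at finitely many points (so the set of times with $r_t$ at an exceptional point has zero occupation measure, a standard argument, or one works with $f'_-$ and $f''\le 0$ directly), one gets $df(r_t) \le f'_-(r_t)\,dr_t + \tfrac12 f''(r_t)\,d[r]_t + (\text{mart})$. Substituting $dr_t$ and $d[r]_t$ from the previous step produces the first term $4u\gamma^{-2}\,rc^2 f''(r_t)$ (note $\tfrac12\cdot 8 = 4$) and the term $(\alpha|Q_t| - \tfrac{\eta}{1+\eta}\alpha|Z_t|)f'_-(r_t)$, modulo the factor $G_t$ which comes in at the product stage. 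Then I would compute $dG_t$ using Lemma~\ref{lem:L}: $dG_t = \epsilon\,d\lyp(X_t,V_t) + \epsilon\,d\lyp(X'_t,V'_t)$, whose drift is at most $\epsilon\gamma(2(d+A) - \lambda\lyp(X_t,V_t) - \lambda\lyp(X'_t,V'_t))\,dt$, plus a martingale part. The quadratic covariation $d[f(r),G]_t$ is where the mixed term $4\epsilon\max(1,(2\alpha)^{-1})\,rc^2\,r_t f'_-(r_t)$ arises: the martingale part of $\lyp(X_t,V_t)$ comes from $\nabla_v\lyp\cdot\sqrt{2\gamma u}\,rc\,dB^{rc}$ (the synchronous part cancels between the two copies or is handled directly), and pairing it with the martingale part of $f(r_t)$ (proportional to $f'_-(r_t)$ times the radial noise) gives a bound involving $|\nabla_v\lyp|$; using $\lyp$'s explicit form \eqref{H} one estimates $|\nabla_v\lyp|$ by a multiple of $r_t$-like quantities, yielding the stated coefficient. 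I should double-check the sign bookkeeping and that the covariation can be bounded by $+4\epsilon\max(1,(2\alpha)^{-1})\,rc^2\,r_t f'_-(r_t)$ uniformly.

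Finally, combining via the product rule $d(e^{ct}\rho_t) = e^{ct}(c\rho_t\,dt + G_t\,df(r_t) + f(r_t)\,dG_t + d[f(r),G]_t)$ and collecting: the $c\rho_t = c f(r_t)G_t$ term gives $\gamma^{-1}c\,f(r_t)G_t$ after factoring out $\gamma$; the $G_t\,df(r_t)$ drift gives the first two lines of $K_t$; the $f(r_t)\,dG_t$ drift gives the $(2(d+A) - \lambda\lyp - \lambda\lyp)\epsilon f(r_t)$ term; and the covariation gives the remaining mixed term. All martingale contributions are gathered into $(M_t)$, which is a continuous local martingale because all diffusion coefficients are bounded on bounded sets and the integrands are locally bounded. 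Integrating from $0$ to $t$ yields \eqref{rhot} with $\gamma K_t$ as the integrand, after pulling out the common factor $\gamma$ from all drift terms (each drift term genuinely carries a $\gamma$: the $\lyp$-drift from Lemma~\ref{lem:L}, the $r_t$-drift terms since $dZ_t, dQ_t$ and the noise variances all scale with $\gamma^{\pm1}$ appropriately, and the $c$-term after writing $c = \gamma\cdot\gamma^{-1}c$).

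The main obstacle I expect is the careful treatment of the non-smoothness of $f$ at finitely many points combined with the local-time term of $|Z_t|$ at $Z_t=0$: one must verify that these contribute nothing (or nothing of the wrong sign) — the local time of $|Z_t|$ at $0$ enters with a sign that is harmless because $f$ is non-decreasing and one can pass to $f'_-$, while the exceptional points of $f$ are handled by noting $\int_0^t \mathbf{1}_{\{r_s \in F\}}\,ds$-type occupation terms vanish or by a direct approximation/Meyer--Tanaka argument. The other delicate point is getting the exact constants in the mixed covariation term right, i.e.\ justifying the factor $\max(1,(2\alpha)^{-1})$, which requires carefully bounding $|\nabla_v\lyp(x,v)|$ in terms of $r$ and separating the cases $2\alpha \ge 1$ and $2\alpha < 1$.
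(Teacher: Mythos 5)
Your proposal is correct and follows essentially the same route as the paper's own proof: semimartingale decompositions of $|Z_t|$, $|Q_t|$, and hence $r_t$, with the radial noise and the vanishing of $rc$ at $Q_t=0$ killing the Itô/local-time corrections for $|Q_t|$, then Itô--Tanaka for $f(r_t)$, the Lyapunov drift bound for $G_t$, the covariation estimate via $|\nabla_v\lyp(X_t,V_t)-\nabla_v\lyp(X'_t,V'_t)|\le u^{-1}\gamma\max(1,(2\alpha)^{-1})r_t$, and the product rule. Your flagged concerns (exceptional points of $f$, local time at $Z_t=0$ and $Q_t=0$, sign of the covariation term) are exactly the delicate points, and all are handled as you suspect: $Z_t$ has absolutely continuous paths, $rc$ vanishes at $Q_t=0$ by \eqref{rc0}, and the covariation is bounded above (not identically equal to) the stated term, which is what the one-sided inequality \eqref{rhot} needs.
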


\begin{proof}[Proof of Lemma \ref{lem:B}]
By \eqref{Zt}, the paths of the process $(Z_t)$ are almost surely
continuously differentiable with derivative $dZ/dt=-\gamma Z+\gamma Q$. Therefore, $t\mapsto |Z_t|$ is almost surely
absolutely continuous with
\begin{eqnarray*}
\frac{d}{dt}|Z_t|& =& \frac{Z_t}{|Z_t|	}\cdot (-\gamma Z_t+\gamma Q_t)\qquad\mbox{for a.e.\ }t\mbox{ such that }Z_t\neq 0,\mbox{ and}\\
 \frac{d}{dt}|Z_t|& \le & \gamma |Q_t|\qquad\mbox{for a.e.\ }t\mbox{ such that }Z_t= 0.
\end{eqnarray*}
In particular, we obtain
\begin{equation}
\label{B1}\frac{d}{dt}|Z_t|\ \le\ -\gamma |Z_t|+\gamma |Q_t|\qquad\mbox{for
a.e.\ } t\geq 0.\\
\end{equation}

The process $(Q_t)$ satisfies the SDE \eqref{Qt}. Notice that by \eqref{rc0},
the noise coefficient vanishes if $Q_t=0$. Therefore, similarly to Lemma 4 in
\cite{EberlePTRF}, we can apply It\^o's formula to conclude that almost surely,
\begin{eqnarray}\label{B2}
	|Q_t| &=& |Q_0|\, +\, A_t^Q\, +\, \tilde{M}_t^Q\qquad\text{for all } t\ge 0,
\end{eqnarray}
where $(A_t^Q)$ and $(\tilde{M}_t^Q)$ are the absolutely continuous process and
the martingale given by
\begin{eqnarray*}
\label{B3} A_t^Q &=& -u\gamma^{-1}\, \int_0^t e_s^T\cdot(\nabla U(X_s)-\nabla
U(X'_s))\, ds, \\
\label{B4} \tilde{M}_t^Q &=& \sqrt{8u\gamma^{-1}}\, \int_0^t rc(Z_s,W_s)\,
e_s^T\, dB_s^{rc}.
\end{eqnarray*}
Notice that there is no It\^o correction, because $\partial^2_{q/|q|} |q|=0$ for
$q\not=0$ and the noise coefficient vanishes for $Q_t=0$. By \eqref{B1},
\eqref{B2} and the Lipschitz condition on $\nabla U$, we conclude that
$r_t=\alpha |Z_t|+|Q_t|$ has the semimartingale decomposition
\begin{eqnarray*}
\label{B5} r_t &=& |Q_0|\, +\, \alpha |Z_t|\, +\, A_t^Q\, + \, \tilde{M}_t^Q,
\end{eqnarray*}
where $t\mapsto \alpha |Z_t| + A_t^Q$ is almost surely absolutely continuous
with derivative
\begin{align}\label{B6}
\frac{d}{dt} (\alpha |Z_t|\, +\, A_t^Q) \ &\leq\  \left(
(Lu\gamma^{-2}-\alpha)|Z_t|+\alpha |Q_t|\right)\,\gamma\qquad\text{for a.e.\
} t\geq 0.
\end{align}
Since by assumption, $f$ is concave and piecewise $C^2$, we can now apply the 
It\^o-Tanaka formula to $f(r_t)$. Let $f'_{-}$ and $f''$ denote the left-sided
first derivative and the almost everywhere defined second derivative.
Notice that the generalized second derivative of $f$ is a signed measure
$\mu_f$ such that $\mu_f(dr)\leq f''(r)\, dr$. We obtain a semimartingale
decomposition
\begin{eqnarray}\label{B7}
	e^{ct}\,f(r_t) &=& f(r_0)\, +\, \tilde{A}_t\, + \, \tilde{M}_t
\end{eqnarray}
with the martingale part
\begin{eqnarray}\label{B8}
	\tilde{M}_t &=& \sqrt{8u\gamma^{-1}}\, \int_0^t e^{cs}\, f'_{-}(r_s)\,
	rc(Z_s,W_s)\, e_s^T\, dB_s^{rc}
\end{eqnarray}
and a continuous finite-variation process $(\tilde{A}_t)$ satisfying 
\begin{align} \label{B9}
	d\tilde{A}_t \ &=  \left(c\, f(r_t)\, +\,(
(Lu\gamma^{-2}-\alpha)|Z_t|+\alpha |Q_t|)\, \gamma\,f'_{-}(r_t) \right)\, e^{ct}\, dt
\\ \ & + \left(4u\gamma^{-1}\, rc(Z_t,W_t)^2\, f''(r_t) \right)\, e^{ct}\,
dt.\nonumber
\end{align}
Next, we consider the time evolution of the process $G_t=1 + \epsilon
\lyp(X_t,V_t) + \epsilon \lyp(X'_t,V'_t)$. An application of It\^o's formula
shows that by \eqref{coupling},
\begin{eqnarray}
	\nonumber dG_t &=& \epsilon (\mathcal L \lyp)(X_t,V_t)\, dt + \epsilon (\mathcal
	L \lyp)(X'_t,V'_t)\, dt\\
	&& +\,  \epsilon \sqrt{2u\gamma} (\nabla_v \lyp(X_t,V_t)) - \nabla_v
	\lyp(X'_t,V'_t))^T rc(Z_t,W_t)\,e_te_t^T\,   dB_t^{rc}\label{B10}
\\
\nonumber && +\, \epsilon \sqrt{2u\gamma} (\nabla_v \lyp(X_t,V_t)) + \nabla_v	\lyp(X'_t,V'_t))^T rc(Z_t,W_t)\, (I_d-e_t e_t^T) \, dB_t^{rc}
\\
\nonumber	&& +\, \epsilon \sqrt{2u\gamma} (\nabla_v \lyp(X_t,V_t)) + \nabla_v
	\lyp(X'_t,V'_t))^T sc(Z_t,W_t) \, dB_t^{sc}
\end{eqnarray}
Hence by \eqref{B7}, \eqref{B10}, the It\^o product rule and \eqref{B8},
we obtain the semimartingale decomposition
\begin{eqnarray}\label{B10a}
	e^{ct}\rho_t &=& e^{ct}\, f(r_t)\, G_t \, = \, \rho_0 \, + \, M_t \, + \, A_t,
\end{eqnarray}
where $(M_t)$ is a continuous local martingale, and
\begin{eqnarray}\nonumber
	dA_t &=& G_t\, d\tilde{A}_t \, + \epsilon\, e^{ct}\, f(r_t)\,  \left((\mathcal
	L \lyp)(X_t,V_t)\, +\, (\mathcal L \lyp)(X'_t,V'_t)\right)\, dt\\
	&& + \, 4\epsilon u \, e^{ct}\,  f'_{-}(r_t)\, rc(Z_t,W_t)^2 (\nabla_v
	\lyp(X_t,V_t)) - \nabla_v \lyp(X'_t,V'_t))^T e_t\, dt. \label{B11}
\end{eqnarray}
Here we have used that $B^{rc}$ and $B^{sc}$ are independent Brownian motions in
$\mathbb{R}^d$. Now recall that by Lemma \ref{lem:L}, 
\begin{eqnarray} \label{B12}
	\mathcal L \lyp &\leq& (d\,+\,A\,-\,\lambda \lyp)\,\gamma.
\end{eqnarray} Furthermore, a simple computation shows that by
\eqref{H}, 
\begin{eqnarray}\nonumber
\lefteqn{	|\nabla_v \lyp(X_t,V_t) - \nabla_v \lyp(X'_t,V'_t)| = |\frac{\gamma}{2u}  (X_t+ \frac 2{\gamma}V_t-X'_t-\frac 2{\gamma}V'_t)|}\\
\label{B13} &\leq  u^{-1}\gamma \, (|Q_t|\,+\,|Z_t|/2) \, \leq \,
u^{-1}\gamma \max(1,(2\alpha)^{-1})\, r_t.
\end{eqnarray}
By combining \eqref{B11}, \eqref{B9}, \eqref{B12} and
\eqref{B13} we finally obtain
$
	dA_t \leq \gamma \, e^{ct}\, K_t\, dt$, where
\begin{eqnarray*}
	K_t &= &\gamma^{-1} c f(r_t)G_t\, +\, ((Lu\gamma^{-2}-\alpha)|Z_t|+\alpha
	|Q_t|)f'_{-}(r_t)G_t\\
	&&+4u\gamma^{-2}rc(Z_t,W_t)^2 f''(r_t)G_t
	\\ &&+\epsilon f(r_t)\left(2(d+A)-\lambda \lyp(X_t,V_t)-\lambda
	\lyp(X'_t,V'_t)\right)
	\\&&+ 4\epsilon \max(1,(2\alpha)^{-1}) r_t f'_{-}(r_t)
	rc(Z_t,W_t)^2.
\end{eqnarray*}
The assertion now follows
from \eqref{B10a} by setting $\alpha=(1+\eta)Lu\gamma^{-2}$.
\end{proof}

\section{Contractivity in different regions}\label{sec:contrregion}
As above suppose that $\alpha =(1+\eta )Lu\gamma^{-2}$ for some
$\eta\in (0,\infty )$. We will choose $\epsilon $ and $c$ such that
\begin{equation}
\label{epsc}(d+A)\, \epsilon\ =\ 4\, c/\gamma .
\end{equation}
This choice guarantees in particular that the terms $2(d+A)\epsilon$
and $\gamma^{-1}c$ in \eqref{K} are of comparable order.

We consider a coupling as introduced in Section \ref{sec:coupling}.
In order to make $\rho_t$ a contraction on average, we have to
choose the parameters such that $K_t\le 0$. This will be achieved
up to an error term which vanishes as $\xi\downarrow 0$. In order 
to bound $K_t$, we distinguish between different regions of the
state space:\medskip

{\em (i) $|Q_t|\ge\xi$ and $r_t\le R_1$.} Here by \eqref{rc1},
$rc(Z_t,W_t)=1$. Therefore, by \eqref{K} and \eqref{epsc},
and since $|Q_t|\le r_t$ and $G_t\ge 1$, 
\begin{eqnarray}
\nonumber
K_t &\le & 4u\gamma^{-2}\, f''(r_t)\, G_t\, +\, ((1+\eta )Lu\gamma^{-2} +
4\epsilon \max(1,(2\alpha)^{-1}))\, r_t\, f_-^\prime(r_t)\, G_t \\ && \, +\,
9\, \gamma^{-1}c\, f(r_t)\, G_t .\nonumber
\end{eqnarray}
Similarly to \cite{EberlePTRF},
we can now ensure that $K_t\le 0$ by choosing $f$ appropriately. We set
\begin{align}
\label{f1}f(r)\ &=\ \int_0^{r\wedge R_1}\varphi (s)\, g(s)\, ds,\qquad
\mbox{where}\\
\label{phi1}\varphi (s) \ &=\  \exp\left( -(1+\eta )Ls^2/8-\gamma^2u^{-1}\epsilon\max(1,(2\alpha)^{-1})s^2/2\right) ,\quad\mbox{and}\\
\label{g1}g(r) \ &=\  1\, -\, \frac 94c\gamma u^{-1}\int_0^r\Phi (s)
\varphi (s)^{-1}\, ds\quad\mbox{with }\ \Phi (s)=\int_0^s\varphi (x)\, dx.
\end{align}
Then $4u\gamma^{-2}\varphi^\prime (r_t)+((1+\eta )Lu\gamma^{-2}
+4\epsilon\max(1,(2\alpha)^{-1}))r_t\varphi (r_t)=0$, and hence
\begin{equation}
\label{K1}K_t \ \le \ 4u\gamma^{-2}\varphi (r_t)g^\prime (r_t)G_t\, +
\, 9\gamma^{-1}c\, \Phi (r_t)G_t\ \le\ 0.
\end{equation}
In order to ensure $g(r)\ge 1/2$ for $r\le R_1$, we have to assume
\begin{equation}
\label{c1}c\ \le\ \frac 29\, u\gamma^{-1}\left/ \int_0^{R_1}\Phi (s)\varphi (s)^{-1}\, ds\right. \, .
\end{equation}

{\em (ii) $|Q_t|<\xi$ and $r_t\le R_1$.} 
In this region, there is a transition from
reflection coupling to synchronous coupling, which is applied for $Q_t=0$. Hence we can not rely on the 
additional contraction properties gained by applying 
reflection coupling and choosing $f$ 
sufficiently concave. Instead, however, we can use that
\begin{equation}
 \label{31} |Q_t|\le\xi\qquad\mbox{and}\qquad
 \alpha |Z_t|\ =\ r_t-|Q_t|\ \ge\ r_t-\xi .
 \end{equation} 
By the choice of $f$ and since $g\ge 1/2$, we obtain, similarly to Case (i),
\begin{eqnarray}
\nonumber
K_t &\le & \left(4u\gamma^{-2}\, f''(r_t)\, G_t\, +\,4\epsilon\max(1,(2\alpha)^{-1})r_tf_-^\prime(r_t)\right)\,  rc(Z_t,W_t)^2\\
\nonumber &&-\tfrac{\eta}{1+\eta}r_t\, f_-^\prime (r_t)\, G_t\, +\, 
(1+\alpha )\xi f_-^\prime (r_t)\, G_t\, +\, 
9 \gamma^{-1}c\, f(r_t)\, G_t \\
\nonumber &\le &-\tfrac 12\, \tfrac{\eta}{1+\eta}\, r_t\, \varphi (r_t)G_t\, +\, 9 \gamma^{-1}c\, \Phi (r_t)\, G_t\, +\, (1+\alpha )\xi G_t.
\end{eqnarray}
In order to ensure that the upper bound converges to $0$ as $\xi\downarrow 0$, we assume
\begin{equation}
\label{c2}c \ \le\ \frac{1}{18}\, \gamma\, \frac{\eta}{1+\eta}\, 
\inf_{r\in (0,R_1]}\frac{r\varphi (r)}{\Phi (r)}.
\end{equation}
Under this condition, we obtain
\begin{equation}
\label{K2}K_t\ \le\ (1+\alpha )\, \xi\, G_t.
\end{equation}

{\em (iii) $r_t>R_1$.} Here $f'_-(r_t)=0$. Hence by
\eqref{K}, \eqref{G}, \eqref{epsc} and \eqref{H2}, 
\begin{align}
\label{K3}
K_t \ &=\left[ 2(d+A)+\gamma^{-1}c\epsilon^{-1}-(\lambda-\gamma^{-1}c)(\lyp(X_t,V_t)+\lyp(X'_t,V'_t))\right]\, \epsilon\, f(r_t)\\\nonumber
\ &\le \ \left[ \tfrac 94(d+A)-\tfrac{15}{16}\lambda\, (\lyp(X_t,V_t)+\lyp(X'_t,V'_t))\right]\, \epsilon\, f(r_t)\ \le\ 0,
\end{align}
provided we assume
\begin{equation}
\label{c3}c\ \le\ \gamma\lambda /16.
\end{equation}

Notice that the functions $\varphi$, $g$ and $f$ in \eqref{phi1},
\eqref{g1}, \eqref{f1}, and the constants $\alpha$, $\epsilon$
determining $\rho$ do not depend on the value of $\xi$. Therefore,
by combining Lemma \ref{lem:B}, \eqref{K1}, \eqref{K2} and
\eqref{K3}, we obtain the following general result:

\begin{thm}\label{thm:D}
Let $\alpha =(1+\eta )Lu\gamma^{-2}$ for some $\eta\in (0,\infty )$,
and let 
$\epsilon =4\gamma^{-1}c/(d+A)$ for a positive constant $c$ such that
\begin{equation}
\label{c0}c\ \le\ \gamma\, \min\left(
\frac 29u\gamma^{-2}\left/ \int_0^{R_1}\frac{\Phi (s)}{\varphi (s)}\, ds\, ,\; \frac{1}{18}\, \frac{\eta}{1+\eta}\,\inf_{s\in (0,R_1]}
\frac{s\varphi (s)}{\Phi (s)}\, ,\; \frac{1}{16}\lambda\right.\right).
\end{equation}
Moreover, let $f:[0,\infty)\to [0,\infty )$ be defined by \eqref{f1},
\eqref{phi1} and \eqref{g1}. Then for any $t\ge 0$ and for any 
probability measures $\mu ,\nu$ on $\mathbb R^{2d}$,
\begin{equation}
\label{Wcontr}\mathcal W_\rho (\mu p_t,\nu p_t)\ \le\ e^{-ct}\,
\mathcal W_\rho (\mu ,\nu ).
\end{equation}
\end{thm}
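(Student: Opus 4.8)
The plan is to assemble Lemma~\ref{lem:B} with the three case bounds \eqref{K1}, \eqref{K2} and \eqref{K3}, then pass to expectations, discard the local martingale, and let $\xi\downarrow0$. Observe first that the hypothesis \eqref{c0} on $c$ is precisely the conjunction of the three conditions \eqref{c1}, \eqref{c2} and \eqref{c3} used in cases (i), (ii) and (iii) above (and \eqref{epsc} is our choice of $\epsilon$). Fix $\xi>0$ and a coupling $\Gamma\in\Pi(\mu,\nu)$, and run the coupling process \eqref{coupling} with parameters $\alpha=(1+\eta)Lu\gamma^{-2}$, $R_1$, $\epsilon=4\gamma^{-1}c/(d+A)$ and $f$ given by \eqref{f1}, \eqref{phi1} and \eqref{g1}, started from $((X_0,V_0),(X_0',V_0'))\sim\Gamma$. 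Since the law of $((X_t,V_t),(X_t',V_t'))$ is a coupling of $\mu p_t$ and $\nu p_t$, it suffices to bound $\E[\rho_t]$. By Lemma~\ref{lem:B}, $e^{ct}\rho_t\le\rho_0+\gamma\int_0^te^{cs}K_s\,ds+M_t$ with $M_t$ a continuous local martingale, and cases (i)--(iii) show that, along every path, $K_s\le0$ on $\{|Q_s|\ge\xi,\,r_s\le R_1\}\cup\{r_s>R_1\}$ and $K_s\le(1+\alpha)\xi\,G_s$ on the transition region $\{|Q_s|<\xi,\,r_s\le R_1\}$; since these three conditions are exhaustive and $G_s\ge1$, we obtain the global pointwise bound $K_s\le(1+\alpha)\xi\,G_s$.

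Taking expectations is the next step: using a localizing sequence of stopping times to remove $M_t$ and then letting the localization run to infinity (Fatou on the left, monotone convergence on the right) yields
\begin{equation*}
e^{ct}\,\E[\rho_t]\ \le\ \E[\rho_0]\ +\ \gamma\,(1+\alpha)\,\xi\int_0^te^{cs}\,\E[G_s]\,ds .
\end{equation*}
Here $\sup_{s\le t}\E[G_s]$ is finite and bounded uniformly in $\xi$: by Lemma~\ref{lem:L} the marginal dynamics satisfy $\tfrac{d}{ds}\E[\lyp(X_s,V_s)]\le\gamma(d+A)-\gamma\lambda\,\E[\lyp(X_s,V_s)]$, hence $\E[\lyp(X_s,V_s)]\le e^{-\gamma\lambda s}\E[\lyp(X_0,V_0)]+(d+A)/\lambda$, and likewise for the primed marginal. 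Crucially $\alpha$, $\epsilon$ and $f$ do not depend on $\xi$, so the error term above is a fixed constant (depending only on $t$, $\mu$, $\nu$) times $\xi$. Since $\mathcal W_\rho(\mu p_t,\nu p_t)\le\E[\rho_t]$ for every $\xi>0$, letting $\xi\downarrow0$ gives $\mathcal W_\rho(\mu p_t,\nu p_t)\le e^{-ct}\,\E[\rho_0]$, and taking the infimum over $\Gamma\in\Pi(\mu,\nu)$ yields \eqref{Wcontr}.

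The case analysis is the routine part, and it has already been carried out in Section~\ref{sec:contrregion}. The step that needs genuine care is the passage to expectations for arbitrary $\mu,\nu$: the uniform bound on $\E[G_s]$, and with it the localization of $M_t$, requires $\E_\Gamma[\lyp(X_0,V_0)+\lyp(X_0',V_0')]<\infty$, which may fail even when $\mathcal W_\rho(\mu,\nu)<\infty$. I would handle this by first establishing \eqref{Wcontr} for $\mu,\nu$ with finite $\lyp$-moments (e.g.\ compactly supported, so that \emph{every} $\Gamma\in\Pi(\mu,\nu)$ has finite moments), and then extending to general $\mu,\nu$ with $\mathcal W_\rho(\mu,\nu)<\infty$ by approximation, using that $(p_t)$ is Feller and that $\mathcal W_\rho$ is lower semicontinuous for weak convergence (its cost $\rho$ being continuous and nonnegative); the case $\mathcal W_\rho(\mu,\nu)=\infty$ being trivial.
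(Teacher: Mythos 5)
Your proof follows the same route as the paper: assemble Lemma~\ref{lem:B} with the case bounds from Section~\ref{sec:contrregion}, localize, take expectations, send $\xi\downarrow0$, and take the infimum over couplings. The one place where you diverge is a genuine improvement: you correctly flag that $\mathcal W_\rho(\mu,\nu)<\infty$ does \emph{not} by itself force $\int\lyp\,d\mu<\infty$ and $\int\lyp\,d\nu<\infty$, because $f$ vanishes at $0$, so mass near the diagonal contributes nothing to $\int\rho\,d\Gamma$ no matter how large $\lyp$ is there. The paper's proof asserts this implication without justification, and as stated it is false (take $\mu=\nu$ with infinite $\lyp$-moment, or more generally two measures differing only near the diagonal). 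Your proposed repair — first prove \eqref{Wcontr} for initial laws with finite $\lyp$-moments, then extend by approximating a near-optimal $\Gamma$ by its normalized restrictions to compacts, using Feller continuity of $(p_t)$ and lower semicontinuity of $\mathcal W_\rho$ under weak convergence — is sound and closes the gap cleanly. One could alternatively argue directly that once one fixes a coupling $\Gamma$ with $\int\rho\,d\Gamma<\infty$, only the restriction of $\Gamma$ to $\{\lyp+\lyp'<n\}$ matters, and let $n\to\infty$ at the end; this avoids invoking the Feller property but is the same idea.
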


\begin{proof}
Let $\Gamma$ be a coupling of two probability measures $\mu$ and $\nu$ on
$\mathbb{R}^d$ such that $\mathcal W_\rho(\mu,\nu)<\infty$. We consider the
coupling process $((X_t,V_t),(X'_t,V'_t))$ introduced in Section
\ref{sec:coupling} with initial law $((X_0,V_0),(X'_0,V'_0))\sim\Gamma$.
By \eqref{c0}, the conditions \eqref{c1}, \eqref{c2}, \eqref{c3} are satisfied.
Therefore, in each of the cases (i), (ii) and (iii) considered above, we obtain
$K_t\leq (1+\alpha)\xi G_t$. Therefore we apply Lemma \ref{lem:B}. By taking
expectations in \eqref{rhot}, evaluated at localizing stopping times
$T_n\uparrow t$ and applying Fatou's lemma as $n\rightarrow\infty$, we obtain
\begin{eqnarray}\label{approxcontr}
	\mathbb{E}[\rho_t]&\leq& e^{-ct}\, \mathbb E[\rho_0]\, +\,
	\gamma (1+\alpha)\xi \int_0^t e^{c(s-t)}\, \mathbb E[G_s]\, ds
\end{eqnarray}
for any $\xi>0$ and $t\geq 0$. Note that the coupling process and the coupling 
distance $\rho_t$ still depend on the value of $\xi$. On the other hand,
the expectation of $G_s$ does not depend on $\xi$. Indeed, by \eqref{G},
\begin{eqnarray}
\label{EGt}	\mathbb E[G_s] &=& \mathbb E[1\,+\,\epsilon \lyp(X_s,V_s)\,+\,\epsilon
\lyp(X'_s,V'_s)]\\
&=& 1\, +\,\epsilon \int p_s \lyp\, d\mu \,+\,\epsilon \int p_s \lyp\, d\nu.  \nonumber
\end{eqnarray}
Since $\mathcal W_{\rho}(\mu,\nu)<\infty$, both $\int \lyp \,d\mu$ and $\int
\lyp \,d\nu$ are finite. Therefore, by the Lyapunov condition in Lemma \ref{lem:L},
the expectation in \eqref{EGt} is finite, too.

Since $((X_t,V_t),(X'_t,V'_t))$ is a coupling of $\mu p_t$ and $\nu p_t$, we
have $\mathcal W_\rho(\mu p_t, \nu p_t)\leq \mathbb E[\rho_t]$ for any $\xi>0$.
Moreover, $\mathbb E[\rho_0]=\int \rho\, d\Gamma$. Hence by applying
\eqref{approxcontr} and taking the limit $\xi\downarrow 0$, we obtain
\begin{eqnarray}\label{Wcontr1}
	\mathcal W_\rho(\mu p_t, \nu p_t) &\leq & e^{-ct}\, \int  \rho\, d\Gamma\qquad
	\text{for any } t\geq 0.
\end{eqnarray}
The assertion follows since \eqref{Wcontr1} holds for an arbitrary coupling of
$\mu$ and $\nu$.
\end{proof}

\section{Choice of the constants}\label{sec:optimize}
The function 
\begin{eqnarray}\label{40}
	\varphi(s)&=&
	\exp\left(-(1+\eta)Ls^2/8-u^{-1}\gamma^2\epsilon\max(1,(2\alpha)^{-1})\, s^2/2
	\right)
\end{eqnarray}
determining the metric in Theorem \ref{thm:D} still depends on the values of the
constants $\eta\in(0,\infty)$, $\alpha=(1+\eta )Lu\gamma^{-2}$ and
$\epsilon\in(0,\infty)$. In order to prove our main result, we 
now choose explicit values for $\eta$ and $\epsilon$ (and hence for $\alpha$
and $c$). We first discuss how to choose these constants in order to optimize
the resulting bound for the contraction rate in \eqref{c0} approximately, and
then we apply Theorem \ref{thm:D} with the chosen constants. A first condition
we want to be satisfied is that $\varphi(s)$ is bounded below by the 
minimal possible value $\exp(-Ls^2/8)$ up to a multiplicative constants.
Therefore, we choose $\eta$ and $\epsilon$ such that
\begin{eqnarray}
\label{41} \varphi(s) &\geq& e^{-2}\, \exp(-Ls^2/8) \qquad\text{for any }
s\in[0,R_1].
\end{eqnarray}
By \eqref{40}, this bound holds true if 
\begin{eqnarray}\label{42}
	\eta&=& 8/(LR_1^2)=\Lambda^{-1} \qquad\text{or, equivalently,} \\
\label{42a} \alpha &=& (1+\Lambda^{-1})Lu\gamma^{-2} \, = \, (L+8R_1^{-2})
u\gamma^{-2}, \qquad \text{and} \\
\label{44} \epsilon &\leq& 2 \min(1,2\alpha)\, u \gamma^{-2} R_1^{-2}.
\end{eqnarray}
We recall from \eqref{R1} that
\begin{align}
	\label{R0} R_1 \,=\, \sqrt{(1+\alpha)^2+\alpha^2}\, R_0, \quad\text{where}\quad
	R_0\,
	:=\, u^{1/2}\gamma^{-1}\left(\frac{96(d+A)}{5\lambda(1-2\lambda)}\right)^{1/2}.
\end{align}
In particular, $R_1^{-2}$ is a decreasing function of $\alpha$, and hence there
are unique values $\alpha,\eta\in(0,\infty)$ such that \eqref{42a} and
\eqref{42} are satisfied. We fix $\eta$ and $\alpha$ correspondingly, and then
we choose $\epsilon>0$ such that \eqref{44} is satisfied. By \eqref{epsc} and
\eqref{R0} the condition \eqref{44} on $\epsilon$ is equivalent to the following 
additional constraint on the constant $c$ in Theorem \ref{thm:D}:
\begin{eqnarray}\label{45}
	c &=& \gamma \epsilon (d+A)/4  \ \leq\  \min(1,2\alpha)\, u \gamma^{-1} (d+A)
	R_1^{-2}/2 \\ & = & \frac{5}{192}
	\frac{\min(1,2\alpha)}{(1+\alpha)^2+\alpha^2}\, \gamma \lambda (1-2\lambda).\nonumber
\end{eqnarray}
Since the last expression is smaller than $\gamma \lambda/16$, we see that 
Theorem \ref{thm:D} applies with parameters $\eta$ and $\epsilon$ satisfying
\eqref{42} and \eqref{44} whenever $c$ is smaller than
\begin{align}\label{46}
	\gamma \min\left( \frac{5}{192} \frac{\min(1,2\alpha)\lambda
	(1-2\lambda)}{(1+\alpha)^2+\alpha^2},
\frac {2u}{9\gamma^{2}}\left/ \int_0^{R_1}\frac{\Phi }{\varphi }\right. ,\, \frac{1}{18} \frac{\eta}{1+\eta}\inf_{s\in (0,R_1]}
\frac{s\varphi (s)}{\Phi (s)} \right).
\end{align}
We can now complete the proof of Theorem \ref{thm:1} by deriving an explicit
lower bound for the right hand side of \eqref{46}.

\begin{proof}[Proof of Theorem \ref{thm:1}]
We fix $\eta$ and $\alpha$ as in \eqref{42}, \eqref{42a} and we choose
$\epsilon>0$ such that \eqref{44} holds. For these parameters we give explicit
lower bounds for the three terms in the mininum in \eqref{46}: 
\par\medskip
1. By \eqref{42a}, \eqref{R0} and \eqref{lambda1},
	\begin{eqnarray}\nonumber
		Lu\gamma^{-2} &\leq& \alpha \, \leq \, (L+8R_0^{-2}) u\gamma^{-2} \, \leq \,
		Lu\gamma^{-2}\, +\, \frac{5}{12} \lambda/(d+A)\\\label{49}
		&\leq & (1+\frac{5}{6}(d+A)^{-1}) Lu\gamma^{-2} \, \leq \, \frac{11}{6}
		Lu\gamma^{-2}.
	\end{eqnarray}
	Therefore, and since $\lambda \leq 1/4$, we obtain
	%
%
\begin{align}
	\frac{\min(1,2\alpha)}{(1+\alpha)^2+\alpha^2}\, &\ge\,
	\min\left(\frac{4}{5}\alpha,\frac{1}{10}\alpha^{-2}\right),\nonumber\\\label{50}
	\frac{5}{192}\frac{\min(1,2\alpha)\lambda(1-2\lambda)}{(1+\alpha)^2+\alpha^2}&\geq
		\frac{1}{96}\,\min\left(Lu\gamma^{-2}, \frac{1}{32}
		(Lu\gamma^{-2})^{-2}\right)\,\lambda. 
\end{align}

\par\medskip
2. Recall that $\Phi(r)=\int_0^r \varphi(s)\, ds$. Since $\varphi(s)\leq
	\exp(-Ls^2/8)$, we have
	\begin{eqnarray}\label{50a}
		\Phi(r) &\leq & \int_0^{\infty} \exp(-Ls^2/8)\, ds \, = \,
		\sqrt{2\pi/L}\quad\text{for any }r\geq 0.
	\end{eqnarray}
	Furthermore, by \eqref{41},
	\begin{eqnarray*}
		e^{-2}\int_0^{R_1} 1/\varphi &\leq& \int_0^{R_1}
		\exp(Ls^2/8)\, ds \,\leq\, \frac{8}{LR_1} \exp(LR_1^2/8).
	\end{eqnarray*}
	Here we have used $\int_0^x \exp(u^2/2)\,du\leq 2x^{-1}\exp(x^2/2)$ for
	$x>0$. We obtain
	\begin{eqnarray*}
		\int_0^{R_1} \frac{\Phi}{\varphi} &\leq&  \int_0^{R_1} \frac {\Phi(R_1)}{\varphi} \, \leq \,
		8\sqrt{2\pi}e^2L^{-3/2}R_1^{-1}e^{LR_1^2/8}\, = \,
		4\sqrt{\pi}e^2L^{-1}\Lambda^{-1/2}e^\Lambda,
	\end{eqnarray*}
	and thus
	\begin{eqnarray}\label{51}
		\frac{9}{2} u\gamma^{-2}\left/ \int_0^{R_1} (\Phi/\varphi)\right. \geq
		\frac{1}{18} \pi^{-1/2} e^{-2} Lu\gamma^{-2} \Lambda^{1/2} e^{-\Lambda}.
	\end{eqnarray}
\par\medskip
3. By \eqref{42} and \eqref{Lambda1}, $\eta \leq 5/6$ and hence
	\begin{eqnarray}
		\label{51a} \frac{\eta}{1+\eta} &\geq& \frac{6}{11}\eta \, = \, \frac{6}{11}
		\Lambda^{-1}.
	\end{eqnarray}
	Moreover, for $r\leq \min(2/\sqrt{L},R_1)$, 
	\begin{eqnarray*}
		r\varphi(r)/\Phi(r) &\geq & \varphi(r)\,\geq\, e^{-1} e^{-(1+\eta)Lr^2/8} \,\geq\,
		e^{-1} e^{-11/12}\, \geq e^{-2} 
	\end{eqnarray*}
	by \eqref{40} and the choice of $\epsilon$, and for $2/\sqrt{L}\leq r\leq
	R_1$, 
	\begin{eqnarray*}
		r\varphi(r)/\Phi(r) &\geq& \sqrt{L/(2\pi)} R_1 \varphi(R_1) \, \geq \, 2 \pi^{-1/2}
		\sqrt{LR_1^2/8} \, e^{-LR_1^2/8},
	\end{eqnarray*}
	where we have used that $s\varphi(s)=se^{-\beta s^2}$, $\beta\geq L/8$, is a
	decreasing function for $s\geq 2/\sqrt{L}$. Since $\Lambda=LR_1^2/8\geq 1$ by
	\eqref{Lambda1}, we obtain
	\begin{eqnarray*}
		r\varphi(r)/\Phi(r) &\geq & 2\pi^{-1/2} e^{-2} \Lambda^{1/2}
		e^{-\Lambda}\qquad\text{for all } r\in(0,R_1],
	\end{eqnarray*}
	and hence by \eqref{51a}, 
	\begin{eqnarray} \label{52}
		\frac{1}{18} \frac{\eta}{1+\eta} \inf_{s\in(0,R_1]} \frac{s\varphi(s)}{\Phi(s)}
		&\geq& \frac{1}{18} \pi^{-1/2} e^{-2} \Lambda^{-1/2}\, e^{-\Lambda}.
	\end{eqnarray}
By combining \eqref{50}, \eqref{51} and \eqref{52}, we see that the right hand
side of \eqref{46} is lower bounded by the minimum of the expression on the right hand sides of \eqref{50}, \eqref{51} and \eqref{52}. As a consequence
we see that Theorem \ref{thm:D} applies with constants $\eta$ given by
\eqref{42} and $\epsilon=4\gamma^{-1}c/(d+A)$ satisfying \eqref{44}, provided
$c\leq c_\star$, where
\begin{eqnarray*}
	c_\star &=& \frac{1}{384}\,\gamma\, \min\left(
	\lambda Lu\gamma^{-2}\,,\; \lambda (Lu\gamma^{-2})^{-2}/8\,,\;
	\,  \Lambda^{1/2} e^{-\Lambda} Lu\gamma^{-2} ,\, \Lambda^{-1/2}\,
	e^{-\Lambda}\, \right).
\end{eqnarray*}
Here we have used that $18\pi^{1/2}e^2\leq 384$. By \eqref{Lambda}, and since
$\Lambda\geq 1$ and $x^{3/2}e^{-x}<1$ for $x\geq 1$, we also have
\begin{eqnarray*}
	\frac{\lambda}{8 (Lu\gamma^{-2})^2} &\geq&
	\left(\frac{\lambda}{2Lu\gamma^{-2}}\right)^2 \frac{1}{2\lambda} \, \geq \,
	 \Lambda^{-2} \geq \Lambda^{-1/2} e^{-\Lambda}.
\end{eqnarray*}
Therefore, the second term in
the minimum defining $c_\star$ can be dropped and the value $c_\star$ is equal to the contraction rate \eqref{rate} in the assertion of Theorem \ref{thm:1}. Thus we have shown that Theorem \ref{thm:1}
is indeed a special case of Theorem \ref{thm:D}.
\end{proof}

\begin{proof}[Proof of Corollary \ref{cor:1A}]
	By \eqref{rDist}, for $(x,v),(x',v')\in\mathbb R^{2d}$,
	\begin{eqnarray}
		\left|(x,v)-(x',v')\right|^2 &\leq& (1+\gamma)^2\, \max(1,\alpha^{-2}) \,
		r((x,v),(x,v'))^2.
	\end{eqnarray}
	Moreover, if  $r:=r((x,v),(x,v'))\le \min(1,R_1)$, then by \eqref{boundf} and \eqref{fboundsTheorem1},
	\begin{equation}
		r^2\,\le\, r \, \le \, f(r)/f'_{-}(R_1) \, \le \, 2e^{2+\Lambda}
		f(r)\, \leq \,2 e^{2+\Lambda}\rho ((x,v),(x,v')),
	\end{equation}
	and if $r\ge \min(1,R_1)$, then by \eqref{ML} and since $\lambda\le 1/4$ and
	$\epsilon=4\gamma^{-1}c/(d+A)$, 
	\begin{align*}\nonumber
		r^2 \ &\le \  {16\, ((1+\alpha )^2+\alpha^2)
u}{\gamma^{-2}\epsilon^{-1} }\, \left(1+\epsilon \lyp(x,v)+\epsilon \lyp(x',v') \right)
\\ \ &\le \ {4\, ((1+\alpha )^2+\alpha^2) (d+A)
u}{  \gamma^{-1}c^{-1} }\,\rho ((x,v),(x,v'))/  f(\min(1,R_1))\\ 
\ &\le \  8e^{2+\Lambda}\,{ (1+2\alpha +2\alpha^2) (d+A)
u}{ \gamma^{-1} c^{-1} }\,\rho ((x,v),(x,v'))/ \min(1,R_1) .\nonumber
	\end{align*}
Combining the above bounds with 
Theorem \ref{thm:1} implies that
\begin{eqnarray*}
	 \mathcal W^{2}(\mu p_t,\nu p_t)^2 &\leq& C \,\mathcal W_{\rho}(\mu p_t,\nu
	 p_t) \, \leq\, Ce^{-ct} \, \mathcal W_{\rho}(\mu,\nu) \quad\text{for any }
	 t\geq 0.
\end{eqnarray*}
Uniqueness of the invariant probability measure now follows by standard arguments.
\end{proof}

\begin{proof}[Proof of Corollary \ref{cor:sdc}]
By \eqref{lambda3} and since $Lu\gamma^{-2}\le 1/30$,
\begin{equation}
\label{ratesdc1}\lambda Lu\gamma^{-2}\ \ge\ \frac{15}{8\ell}(Lu\gamma^{-2})^2.
\end{equation}
Furthermore, by \eqref{Lambda2}, \eqref{Lbeta} and
\eqref{lambda3}, $$d\ \le\ \Lambda\ \le\ 2(d+(L\mathcal R^2-\beta )/8)L\mathcal R^2/\beta\ \le\ \Lambda_1, \qquad\mbox{whence}$$
\begin{equation}
\label{ratesdc2}\Lambda^{1/2}e^{-\Lambda}Lu\gamma^{-2}\ \ge\
d^{1/2}e^{-\Lambda_1}Lu\gamma^{-2},\mbox{\quad and\quad }\Lambda^{-1/2}e^{-\Lambda}\ \ge\ \Lambda_1^{-1/2}e^{-\Lambda_1}.
\end{equation}
The first inequality in \eqref{ratesdc} now follows from Theorem \ref{thm:1}, \eqref{ratesdc1}
and \eqref{ratesdc2}. Moreover, the second inequality holds since by
\eqref{Lbeta}, $\gamma\ge \sqrt{30\, Lu}\ge \sqrt{30\, \beta u}/\mathcal R$. 
\end{proof}

\appendix
\section{Drift conditions and Lyapunov functions}\label{app:Lyapunov}

\begin{proof}[Proof of Lemma \ref{lem:L}]
By \eqref{Gen}, $\mathcal LU(x)=v\cdot\nabla U(x)$,
$\mathcal L |x|^2=2x\cdot v$,
\begin{eqnarray*}
\mathcal L\, \tfrac 12|\gamma^{-1}v|^2 &=& u\gamma^{-1}d\, -\,
\gamma^{-1}|v|^2\, -\, u\gamma^{-2}v\cdot\nabla U(x),\\
\mathcal L\, \tfrac 12|x+\gamma^{-1}v|^2 &=& u\gamma^{-1}d\,  -\, u\gamma^{-1}(x+\gamma^{-1}v)\cdot\nabla U(x),\quad\mbox{and hence}\end{eqnarray*}
\begin{eqnarray*}
\lefteqn{\mathcal L \lyp(x,v) \ =\ \tfrac 12\, \gamma\, \left( 2d\, -\,x\cdot\nabla U(x)\, -\,  u^{-1}|v|^2\, -\, \lambda u^{-1}\gamma \,  x\cdot v\right)}\\
&\le & \gamma \, \left( d+A-\lambda U(x)-\tfrac 14\lambda u^{-1}\gamma^2\, (|x|^2+2x\cdot\gamma^{-1}v+2\lambda^{-1}|\gamma^{-1}v|^2)\right)\\
&=& \gamma \, (d+A-\lambda \lyp(x,v)).
\end{eqnarray*}
\end{proof}

\begin{proof}[Proof of Lemma \ref{lem:sdc}]
Suppose that Assumption \ref{ass:2} is
satisfied, and let $A=(L\mathcal R^2-\beta )/8$. Then by
\eqref{A2prime}, \eqref{A1} and \eqref{Lbeta},
\begin{eqnarray*}
\lefteqn{x\cdot\nabla U(x) \ =\  \frac{|x|}{\mathcal R}\,\frac{\mathcal Rx}{|x|}\,
\nabla U\left( \frac{\mathcal Rx}{|x|}\right)\, +\, x\cdot\left(
\nabla U(x)-\nabla U\left(\frac{\mathcal Rx}{|x|}\right)\right)}\\
&\ge & \beta\mathcal R^{-1}|x|\, -\, L\, (\mathcal R-|x|)\, |x|
\ \ge\ \beta\mathcal R^{-2}|x|^2-(L-\beta\mathcal R^{-2})
(\mathcal R-|x|)|x|\\
&\ge &\beta\mathcal R^{-2}|x|^2\, -\, 2A
\end{eqnarray*} 
holds for $x\in\r^d$ s.t.\ $0<|x|\le\mathcal R$.
Noting that by \eqref{A0} and \eqref{A1},
$$U(x)\ \le\ U(0)\, +\, L|x|^2/2\ =\ L|x|^2/2,$$
we obtain
\begin{eqnarray*}
\lefteqn{U(x)+u^{-1}\gamma^2|x|^2/4\ \le\ (2L+u^{-1}\gamma^2)|x|^2/4}\\
&\le & ( \frac 12x\cdot\nabla U(x)+A)\, ( 1+L^{-1}u^{-1}\gamma^2/2)\, \beta^{-1}L\mathcal R^2
\end{eqnarray*}
for any $x\in\r^d$. Hence \eqref{A2} holds with $\lambda$ given
by \eqref{lambda3}. In particular, $Lu\gamma^{-2}\lambda^{-1}\ge L\mathcal
R^2/(2\beta )$, and hence by \eqref{Lambda},
\begin{equation}
\label{Lambda2a}
\Lambda\ \ge\ \frac 65\, (d+A)\, L\mathcal R^2/\beta .
\end{equation}
Now suppose first that $Lu\gamma^{-2}\le 1/8$. Then since 
$\beta\le L\mathcal R^2$, we have
\begin{eqnarray*}
\lambda &=& \frac{2\beta}{L\mathcal R^2}\,
\frac{Lu\gamma^{-2}}{1+2Lu\gamma^{-2}},\\
(1-2\lambda )^{-1} &=& \frac{1+2Lu\gamma^{-2}}{1+(2-4\beta L^{-1}\mathcal
R^{-2})Lu\gamma^{-2}}\ \le\ 1\,+\,\frac{4Lu\gamma^{-2}}{1-2Lu\gamma^{-2}} \,
\frac{\beta}{L\mathcal R^2}\\ & \le & 1\, +\, \frac{16}{3}\, \frac{\beta}{L\mathcal R^2}\, Lu\gamma^{-2},
\end{eqnarray*}
and hence by
\eqref{Lambda},
\begin{equation}
\label{Lambda2b}
\Lambda \ \le \ \frac 65 (d+A)\beta^{-1}L\mathcal R^2\, (1+2\alpha
+2\alpha^2)(1+2Lu\gamma^{-2})(1+\frac{16}{3}Lu\gamma^{-2}). 
\end{equation}
By \eqref{alphaeps}, $\alpha\le 11\, Lu\gamma^{-2}/6\le 11/48$.
Noting that $Lu\gamma^{-2}\le 1/8$,
\eqref{Lambda2} follows from \eqref{Lambda2a} and \eqref{Lambda2b}.
\end{proof}


\bibliographystyle{abbrvnat}
\bibliography{referencesfinal} 

\end{document}